 \definecolor{darkgreen}{HTML}{336633}
 \definecolor{darkred}{HTML}{993333}
\newcommand{\arxiv}[1]{\href{http://arxiv.org/abs/#1}{\tt
    arXiv:\nolinkurl{#1}}}
\theoremstyle{plain}
\newtheorem{thm}{Theorem}
\newtheorem*{thm*}{Theorem}
\newtheorem*{thmA}{Theorem A}
\newtheorem*{thmB}{Theorem B}
\newtheorem{lem}[thm]{Lemma}
\newtheorem{prop}[thm]{Proposition}
\newtheorem{df-prop}[thm]{Definition-Proposition}
\theoremstyle{definition}
\theoremstyle{remark}
\newtheorem{ex}[thm]{Example}
\newtheorem*{exs}{Example}
\def\al{\alpha}
\def\Hom{\operatorname{Hom}\nolimits}
\def\Res{\operatorname{Res}\nolimits}
\def\Ind{\operatorname{Ind}\nolimits}
\def\Ext{\operatorname{Ext}\limits}
\def\gl{\mathfrak{gl}}
\def\la{\lambda}
\def\pn{\mf{pe} (n)}
\def\ov{\overline}
\newcommand{\mc}{\mathcal}
\newcommand{\mf}{\mathfrak}
\newcommand{\C}{\mathbb C}
\newcommand{\oo}{{\ov 0}}
\newcommand{\oa}{{\bar 0}}
\newcommand{\ob}{{\bar 1}}
\newcommand{\vare}{\epsilon} 
\newcommand{\fg}{\mathfrak{g}}
\newcommand{\fb}{\mathfrak{b}}
\newcommand{\fh}{\mathfrak{h}}
\newcommand{\n}{\mathfrak{n}}
\newcommand{\mZ}{\mathbb{Z}}
\newcommand{\cO}{\mathcal{O}}
\newcommand{\h}{\mathfrak{h}}
\newcommand{\ch}{\mathrm{ch}}
\newcommand{\rad}{\mathrm{rad}}
\newcommand{\Coind}{{\rm Coind}}
\newcommand{\g}{\mathfrak{g}}
\newcommand{\fl}{\mathfrak{l}}
\newcommand{\fp}{\mathfrak{p}}
\newcommand{\fu}{\mathfrak{u}}
\newcommand{\Real}{\mathrm{Re}}
\newcommand{\Z}{{\mathbb Z}}
              \def\wdL{{\widetilde{L}}}
              \def\wdM{{\widetilde{M}}}
            \def\nbob{{\widetilde{M}^\vee}}
            \def\nbobr{{\widetilde{M}_{\mf b^r}^\vee}}
\begin{document}

\numberwithin{equation}{section}

\title[On semisimplicity of Jantzen middles]{On semisimplicity of Jantzen middles  for the periplectic Lie superalgebra}

\author{Chih-Whi Chen}
\date{}

\begin{abstract}    We prove that an integral block of the category $\mc O$ of the periplectic Lie superalgebra  contains a non-semisimple Jantzen middle if and only if it contains a simple  module of atypical highest weight. As a consequence, every atypical integral  block of $\mc O$ does not admit a Kazhdan-Lusztig theory in the sense of Cline, Parshall and Scott.  
	  
\end{abstract}

\maketitle


\noindent
\textbf{MSC 2010:} 17B10 17B55  

\noindent
\textbf{Keywords:} 
Category $\mc O$; Jantzen middle;   Kazhdan-Lusztig theory; periplectic Lie superalgebra; twisting functor.  
\vspace{5mm}

\section{Introduction}\label{sec1}





   \subsection{}

	For a finite-dimensional complex semisimple Lie algebra, it is shown by Andersen and Stroppel \cite[Section 7]{AS} that the validity of Kazhdan-Lusztig conjecture  \cite{KL1} is equivalent to the semisimplicity of Jantzen middles for the regular blocks of the category $\mc O$. Later on, Coulembier  \cite[Theorem 6.4]{Co16} developed   an  analogous connection for basic classical Lie superalgebras. Therefore, the problem of the semisimplicity of Jantzen middles turns out to be interesting and important. 
	
	An earlier achievement is the semisimplicity of Jantzen middles for the general linear Lie superalgebras $\gl(m|n)$ established in \cite[Theorem 6.10]{Co16}, which is  based on the Brundan-Kazhdan-Lusztig theory formulated in \cite{Br1} and proved in \cite{BLW,CLW2}. However, 
 it is still an open question whether the Jantzen middle is always semisimple for Lie superalgebras arising from Kac's classification \cite{Ka1}.
	

 	 Recently, the representation  theory for the periplectic Lie superalgebra $\pn$ has been studied extensively; see, e.g., \cite{Se02}, \cite{Ch15}, \cite{B+9}, \cite{Copn}, \cite{CMS}, \cite{EAS1},  \cite{IRS}, \cite{IS}, \cite{KB} and references therein. In addition, basic aspects and partial solutions to the irreducible character problem of the category $\mc O$ were given in \cite{CC} and \cite{CP}. In order to have a complete picture, it is natural to ask whether there exists a Kazhdan-Lusztig pattern for $\pn$.

 	 In \cite{CPS93}, Cline, Parshall and Scott   introduced a formulation of {\em abstract Kazhdan-Lusztig theory}  in order to provide an appropriate axiomatic framework encompassing  numerous important examples in representation theory.  In \cite[Corollary 3.3]{CSTAMS}, it is shown that the  Brundan-Kazhdan-Lusztig theory for the category $\mc O^\Z$ of $\gl(m|n)$-modules of integral weights is an abstract Kazhdan-Lusztig theory.  The goal of this paper is to start the investigation into the semisimplicity of Jantzen middles and connection with Kazhdan-Lusztig theory in the sense of \cite[Definition 3.3]{CPS93} for the periplectic Lie superalgebra $\pn$; see Section \ref{sect::ass} and Section \ref{sect::KLtheory} for the definitions. This type of connection is generalized to the so-called {\em Lie superalgebras of type I} in the full generality.

 \subsection{}	To explain the results of the paper in more detail, we start by explaining our precise
 	setup.   Following \cite{Se11}, we  consider  $\g=\g_\oa\oplus\g_\ob$ a   quasireductive Lie superalgebra (or {\em classical} in \cite{Ma}) throughout the present paper, namely, $\g_\oa$ is reductive and $\g_\ob$ is semisimple over $\g_\oa$ under the adjoint action. In addition, we will make three assumptions in our setup. First, we assume that $\g$ has a type-I gradation $\g =\g_{-1}\oplus \g_0 \oplus \g_{1}$ which is induced by a grading operator from a Cartan subalgebra $\mf h$ of $\g_\oa$.  Next, we choose a triangular decomposition in the sense of \cite{Ma} \begin{align}
 	&\g =\mf n^-\oplus  \h\oplus \mf n^+, \label{eq::12}
 	\end{align} such that the odd parts of $\mf n^\pm$ are $\g_{\pm 1}$, respectively. We refer to these assumptions as \eqref{ass::A1}-\eqref{pre::tridec} in the paper; see  Section \ref{sect::ass}. Lie superalgebras $\g$ satisfying these assumptions fit into the framework of \cite{CC} under the name {\em Lie superalgebras of type I-0}. For such Lie superalgebras, there is a number of basic properties of the twisting functors  developed in \cite[Section 4.3]{CC} that are to be used in the present paper. We will mainly focus on the case of the periplectic Lie superalgebra $\pn$, which is really the main topic  of the paper.
 	 
 	 

Let $\mc O$ denote the BGG category associated to the triangular decomposition \eqref{eq::12}. Let $\mc O^\Z$  be the full subcategory of $\mc O$ consisting of modules with integral weights. Our first main result is the following.
	\begin{thmA}
 Consider $\g$ a Lie superalgebra of type I-0. 
 If a (indecomposable) block of $\mc O^\Z$   contains a (non-zero) non-semisimple  Jantzen middle then it does not admit a Kazhdan-Lusztig theory in the sense of \cite[Definition 3.3]{CPS93}.
\end{thmA}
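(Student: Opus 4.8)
The plan is to argue by contraposition. Suppose an indecomposable block $\mc B$ of $\mc O^\Z$ admits a Kazhdan--Lusztig theory in the sense of \cite[Definition 3.3]{CPS93}, relative to a length function $\ell\colon\Lambda_{\mc B}\to\Z$; I will show that every Jantzen middle occurring in $\mc B$ is semisimple, which is the contrapositive of Theorem A. The first ingredient is purely homological: by \cite{CPS93} (see Section \ref{sect::KLtheory}), the existence of such a Kazhdan--Lusztig theory implies the parity condition
\[
\operatorname{Ext}^n_{\mc B}(L(\lambda),L(\mu))\neq 0\ \Longrightarrow\ n\equiv \ell(\lambda)-\ell(\mu)\pmod 2 .
\]
Taking $n=1$, this says that $\mc B$ carries no nontrivial extension between simple modules whose $\ell$-values have the same parity, and in particular no self-extensions of simple modules.

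Next I would bring in the description of the Jantzen middle from Section \ref{sect::ass}, which for a Lie superalgebra of type I-0 is built from the behaviour of the twisting functors $T_s$ developed in \cite[Section 4.3]{CC}. For a simple reflection $s$ and a weight $\lambda$ with $s\cdot\lambda<\lambda$, the Jantzen middle $\operatorname{Jan}_s(\lambda)$ is the subquotient $M(\lambda)^1/M(\lambda)^2$ of the Jantzen filtration of the Verma module $M(\lambda)$, which Section \ref{sect::ass} describes by means of $T_s$ and the adjacent Verma module $M(s\cdot\lambda)$. The step I expect to be the main obstacle is to prove that, once $\mc B$ is assumed to carry the Kazhdan--Lusztig theory above, every composition factor $L(\mu)$ of $\operatorname{Jan}_s(\lambda)$ satisfies $\ell(\lambda)-\ell(\mu)\equiv 1\pmod 2$; equivalently, the composition factors of a single Jantzen middle all lie in one $\ell$-parity class. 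I would establish this in the spirit of \cite[Section 7]{AS} and \cite[Theorem 6.4]{Co16}: under the Kazhdan--Lusztig hypothesis the Jantzen filtration of $M(\lambda)$ can be compared with the homological (radical/socle) filtration, using the twisting-functor identities of \cite{CC} and the super Jantzen sum formula recalled in Section \ref{sect::ass}, so that the Jantzen middle occupies a single homological --- hence $\ell$-parity --- degree. What makes this delicate, and genuinely new beyond the reductive and basic classical cases, is that for the periplectic (and general type I-0) Lie superalgebra the Jantzen filtration is not produced by a Casimir deformation, so the comparison must be read off from the twisting functors of \cite{CC} rather than transported from the reductive setting.

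Granting the parity claim, the argument concludes at once. Suppose for contradiction that some Jantzen middle $N:=\operatorname{Jan}_s(\lambda)$ in $\mc B$ is nonzero and not semisimple. Since $N$ has finite length, $\operatorname{rad}N\neq 0$ and $\operatorname{rad}^2N\subsetneq\operatorname{rad}N$, so $N/\operatorname{rad}^2N$ is a nonsplit extension of two semisimple modules; a routine Nakayama-type argument then produces composition factors $L(\mu)$ and $L(\nu)$ of $N$ with $\operatorname{Ext}^1_{\mc B}(L(\mu),L(\nu))\neq 0$. By the parity claim both $\ell(\lambda)-\ell(\mu)$ and $\ell(\lambda)-\ell(\nu)$ are odd, whence $\ell(\mu)\equiv\ell(\nu)\pmod 2$, contradicting the homological parity recorded in the first paragraph. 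Hence every Jantzen middle in $\mc B$ is semisimple, and the contrapositive is precisely Theorem A. Apart from this, the only input not already present in \cite{AS,Co16} is the twisting-functor machinery for type I-0 Lie superalgebras from \cite{CC}; the surrounding architecture --- reducing matters to a parity statement on the composition factors of the Jantzen middle, and observing that a non-semisimple module must witness an extension between simple modules of equal $\ell$-parity --- is formal.
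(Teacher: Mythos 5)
Your overall architecture (argue by contraposition, extract a parity constraint on $\Ext$-groups from the Kazhdan--Lusztig hypothesis, then force semisimplicity) matches the paper's, but the step you yourself flag as the main obstacle is a genuine gap, and the claim you would need there is essentially as strong as the theorem itself. You assert that under the Kazhdan--Lusztig hypothesis \emph{every} composition factor $\wdL(\mu)$ of the Jantzen middle satisfies $\ell(\la)-\ell(\mu)\equiv 1 \pmod 2$. There is no independent route to this: determining the full composition series of $U_\alpha(\la)=\rad T_{s_\alpha}\wdL(\la)$ amounts to the irreducible character problem, and the single-parity statement for \emph{all} composition factors is exactly what semisimplicity would give you a posteriori (a semisimple $U_\alpha(\la)$ equals its socle, which by Proposition \ref{prop::100} consists of simples with $\Ext^1$ to $\wdL(\la)$, hence of odd relative parity). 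Your proposed mechanism for proving it --- comparing the Jantzen filtration $M(\la)^1/M(\la)^2$ with the radical filtration --- does not apply: in this paper, following Coulembier, the Jantzen middle is \emph{defined} as $\rad T_{s_\alpha}\wdL(\la)$; no Jantzen filtration of $\wdM(\la)$ is constructed, and for $\mf{pe}(n)$ there is no Shapovalov/Casimir deformation available to produce one. Note also that weakening your claim to the socle and top alone (which Proposition \ref{prop::100} does give you) would not rescue the elementary endgame: a three-layer module with odd-parity top and socle and an even-parity middle layer has all its adjacent extensions between simples of \emph{opposite} parity, so the parity of first extensions between simples cannot by itself exclude non-semisimplicity.

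The paper's proof avoids composition factors entirely and works instead with $\Hom$-spaces from standard objects and to costandard objects. Using the derived adjunction between $T_{s_\alpha}$ and $G_{s_\alpha}$, the duality $D$, and a four-term exact sequence for $T^{\oa}_{s_\alpha}M(s_\alpha\cdot\mu)$ transported through the Kac functor, it establishes the embeddings
$\Hom_{\mc O}(U_\alpha(\la), \nbob(\mu))\hookrightarrow \Ext^1_{\mc O}(\wdL(\la),\nbob(\mu))$ and
$\Hom_{\mc O}(\wdM(\mu), U_\alpha(\la))\hookrightarrow \Ext^1_{\mc O}(\wdM(\mu),\wdL(\la))$
for $\alpha$-finite $\mu$ with $\widehat\la\neq s_{\widehat\alpha}\cdot\widehat\mu$, and handles the exceptional weights $\widehat\la= s_{\widehat\alpha}\cdot\widehat\mu$ separately via Lemma \ref{lem::lem14} and \cite[Proposition 6.17]{CoM2}. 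This places all weights detected by such $\Hom$-spaces in a single $\ell$-parity class, and the final, nontrivial input is \cite[Theorem 4.1]{CPS93}, which upgrades this single-parity condition on the standard/costandard-detected top and socle to semisimplicity inside a highest weight category carrying a Kazhdan--Lusztig theory. To repair your argument you would need to replace your composition-factor parity claim by these two $\Hom$-embeddings (whose proofs are exactly where the type I-0 twisting-functor and Kac-functor machinery is needed) and invoke the CPS semisimplicity criterion rather than an elementary extension count.
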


For $\g$ a (not necessarily type I) basic classical Lie superalgebra from Kac's list \cite{Ka1}, the Theorem A has been established in the earlier work  of Coulembier  \cite{Co16}. 

 \vskip 0.1cm

  The notion of {\em typicality} of weights for~$\mf{pe}(n)$ has been introduced in \cite[Section 5]{Se02}; see  \eqref{Sect::421} for its definition.  A block of $\mc O^\Z$ is said to be {\em atypical}, in case it contains a simple module of atypical highest weight, and {\em typical}  otherwise. The following is our second main result. 
	\vskip 0.3cm
		
		\begin{thmB}
			Consider $\g= \pn$. 
			Then a block of $\mc O^\Z$   contains a (non-zero) non-semisimple Jantzen middle if and only if  it is atypical. As a consequence, every atypical block of $\mc O^\Z$ does not admit a Kazhdan-Lusztig theory in the sense of \cite[Definition 3.3]{CPS93}.
		\end{thmB}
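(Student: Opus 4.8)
I would deduce the ``as a consequence'' clause immediately from Theorem~A once the forward direction of the equivalence is in hand: an atypical block then contains a non-semisimple Jantzen middle, and Theorem~A rules out a Kazhdan--Lusztig theory in the sense of \cite[Definition~3.3]{CPS93}. So the substance is the equivalence ``a block is atypical $\iff$ it contains a non-semisimple Jantzen middle'', and I would treat the two implications separately.

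For ``typical $\Rightarrow$ every Jantzen middle is semisimple'', the plan is to reduce to $\gl(n)$. A typical integral block of $\mc O^\Z$ for $\pn$ is equivalent to an integral block of category $\mc O$ for $\gl(n)$ (this is the type-I mechanism for handling typical central characters; cf.\ \cite{CP}), and the point is that this equivalence is compatible with the twisting functors used to define the Jantzen middle --- here one uses the basic properties of twisting functors for type I-0 superalgebras from \cite[Section~4.3]{CC}. Because the Jantzen middle is produced functorially from twisting functors, it is carried by the equivalence to a Jantzen middle for $\gl(n)$, and these are semisimple by the Kazhdan--Lusztig theorem for $\gl(n)$ together with the Andersen--Stroppel criterion \cite[Section~7]{AS}. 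Hence a typical block carries no non-semisimple Jantzen middle.

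For the converse, ``atypical $\Rightarrow$ some non-semisimple Jantzen middle'', the plan has four steps. First, use translation functors (tensoring with the natural module and its dual, followed by projection) to reduce, for a given atypical block, to a single conveniently chosen weight $\lambda$ --- namely one of atypicality degree one, equivalently one whose atypicality lives in just two coordinates, so that the local picture is essentially that of $\mf{pe}(2)$. Second, for such $\lambda$ and a suitable simple reflection $s$, form the twisted Verma module and extract the Jantzen middle of $M(\lambda)$, using the twisting-functor formulas of \cite[Section~4.3]{CC} (which present the image of a Verma through a Verma flag) together with the known composition series of atypical periplectic Verma modules from \cite{CC,CP}. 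Third, read off from this that the Jantzen middle of $M(\lambda)$ has Loewy length at least two, hence is not semisimple. Fourth, transport this conclusion back to an arbitrary weight of the block through the translation equivalences of the first step, and pass to atypicality degree $\geq 2$ by localizing the computation at a single atypical pair of coordinates, which again reduces to a $\mf{pe}(2)$-type picture.

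The hard part is the second and third steps. The periplectic Lie superalgebra has no Casimir element, so there is no Jantzen sum formula to pin the Jantzen filtration down numerically; instead one must control the actual Loewy structure of the twisted Verma module, which needs not only the composition multiplicities of atypical periplectic Vermas but genuine extension information. Extracting enough of this submodule structure --- presumably via the odd-reflection / ``star'' combinatorics for $\pn$ and BGG reciprocity --- and then verifying that the reduction of the first step really reaches \emph{every} atypical block, in particular that the translation functors in play do not kill the block under consideration, is where the work of the proof concentrates.
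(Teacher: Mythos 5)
Your overall architecture matches the paper's (Theorem~A handles the ``consequence''; the two implications are proved separately), but both implications as you propose them rest on claims that are not available, and the key idea of the paper's atypical computation is missing.

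For the typical direction, the asserted equivalence between a typical integral block of $\mc O^\Z$ for $\pn$ and a block of $\mc O$ for $\gl(n)$ cannot hold: by the linkage of Lemma~\ref{lem::pnblocks} the typical blocks $\mc O_{\partial^{n-1}}$, $\mc O_{\partial^{n}}$ are indecomposable and contain infinitely many $W$-orbits of highest weights (glued by $\la\sim\la\pm2\vare_k$), whereas an integral block of $\gl(n)$ has at most $n!$ simples; moreover $\pn$ has no central characters to make ``typical central character'' reductions work, and the paper explicitly treats the existence of a Kazhdan--Lusztig theory for these blocks as open. What actually works --- and is what the paper does in Theorem~\ref{prop::9} --- is a pointwise reduction: for typical $\la$ one has $\wdL(\la)=K(L(\la))$, the twisting functor commutes with the Kac functor ($T_{s}K(L(\la))\cong K(T_{s}^\oa L(\la))$, proved by a parity argument on $\Ind_{\g_{\geq0}}^{\g}$), and $K$ preserves tops, so $U_\alpha(\la)\cong K(\rad T_s^\oa L(\la))$ is a direct sum of typical Kac modules, each of which is simple. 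No block equivalence is needed or used.

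For the atypical direction, your steps (2)--(3) require the submodule structure of atypical twisted Verma modules, and you correctly observe there is no Jantzen sum formula and that composition series of atypical periplectic Vermas are only partially known --- but you do not supply the device that circumvents this. The paper's Proposition~\ref{Thm::nonssU} chooses, in each atypical block, an \emph{anti-dominant} weight $\la$ with $(\la+\rho)_1=0$, $(\la+\rho)_2=1$ and the remaining coordinates spread apart; anti-dominance gives $\wdM(\la)=\wdL(\la)=K(L(\la))$, so everything is computed through the Kac functor: one identifies $\mathrm{soc}\,K(L(s\cdot\la))=\wdL(\mu)$ with $\mu=\la-2\vare_2$ via odd reflections, shows $\Ext^1_{\mc O}(\wdL(\mu),\wdL(\la))\neq0$ from the simple socle of $\wdM(s\cdot\la)$ but $\Ext^1_{\mc O}(\wdL(\la),\wdL(\mu))=0$ from simplicity of $\wdM(\la)$, and concludes by Proposition~\ref{prop::100} that $\wdL(\mu)$ lies in the socle but not the top of $U_\alpha(\la)$. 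Your steps (1) and (4) (translation-functor reduction and transport back) are also unnecessary: since the claim is only that the \emph{block} contains some non-semisimple Jantzen middle, it suffices to exhibit one such weight, and the linkage relation $\la\sim w\cdot\la$, $\la\sim\la\pm2\vare_k$ already guarantees every atypical block contains a weight of the required form; translation functors for $\pn$ are not known to give the equivalences you would need to ``transport'' Loewy structure in any case.
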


\subsection{} The paper is organised as follows. In Section \ref{Sect::Pre}, we provide some background materials on quasireductive Lie superalgebras. We review the representation categories and introduce our assumptions  in the Section \ref{sect::ass}. The standard matrix realization of $\pn$ is reviewed in Section \ref{sect::273}.

In Section \ref{sect::Jmiddle}, we study the relevance between semisimplicity of Jantzen middles and  abstract Kazhdan-Lusztig theory in the sense of \cite[Definition 3.3]{CPS1}. 
Section \ref{Sect::Jimddlesub1} is devoted to the proof of Theorem A. Then, we put these results together  to obtain the proof of Theorem B in Section \ref{sect::KLpn}.



\subsection*{Acknowledgment}
 The author was supported by a MoST grant, and he would like to thank  Shun-Jen Cheng and  Kevin Coulembier for interesting discussions and helpful comments.

\section{Preliminaries} \label{Sect::Pre}
Throughout the paper the symbols $\C,\mathbb R, \Z, \Z_{\geq 0}$ stand for the sets of all complex numbers, real numbers, integers and non-negative integers. We always work over the ground field $\C$. Denote the abelian group of order two by $\Z_2=\{\oa, \ob\}$. For a homogeneous element $x$ of a vector superspace $V=V_\oa\oplus V_\ob$, we denote its parity by $\overline x\in \Z_2$. In the paper, we let $\g=\g_\oa\oplus \g_\ob$ be a finite-dimensional  quasireductive Lie superalgebra, namely, $\g_\oa$ is reductive and $\g_\ob$ is a semisimple $\mf g_\oa$-module under the adjoint action.  We denote the universal enveloping algebra of $\g$ by  $U(\g)$ and its center by $Z(\g)$. 

In this section, we collect preliminaries and assumptions  on quasireductive Lie superalgebras.

\subsection{Assumptions and notations} \label{sect::ass}
 Throughout the present paper, we assume that the Lie superalgebra $\g$ is of  type I-0 in the sense of \cite[Section 2.3.1]{CC}, which we shall explain as follows. 

 \subsubsection{} 
 Fix a triangular decomposition of $\g_\oa$:
 \begin{align}
 &\g_\oa = \mf n_\oa^- \oplus \mf h \oplus \mf n_\oa^+. \label{eq::parad}
 \end{align} In the present paper, we  assume that $\g$  is a  quasireductive Lie superalgebra  with a compatible $\Z$-grading $\g =\g_{-1}\oplus \g_0 \oplus \g_{1}$ induced by a grading element $H\in \h$, that is, 
 \begin{align} 
 &\g_0 =\g_\oa \text{ and } \g_\ob =\g_{-1}\oplus\g_1 \text{ with }[\g_1,\g_1] =[\g_{-1},\g_{-1}] =0. \label{ass::A1}\tag{{\bf A1}}\\
 &[H, x]= kx, \text{ for }x \in \g_k \text{ with }k=\pm 1. \label{ass::A2}\tag{{\bf A2}}
 \end{align} We refer to such a Lie superalgebra as {\em a Lie superalgebra of type I-0}.  We will use   notations $\g_{\leq 0}:=\g_0\oplus \g_{-1}$ and  $\g_{\geq 0}:=\g_0\oplus \g_{1}$.

For an element $h\in \mf h$,  we define the following subalgebras:
\begin{equation}\label{deflu}\fl:=\bigoplus_{\Real \alpha(h)=0} \fg^\alpha,\quad \fu^+:=\bigoplus_{\Real \alpha(h)>0} \fg^\alpha, \quad \fu^-:=\bigoplus_{\Real \alpha(h)<0} \fg^\alpha,\end{equation}   where $\mf g^\alpha :=\{X\in \g|~[h,X] = \alpha(h)X, \text{~for all }h\in \h\}$. 
We claim that \eqref{ass::A1} and \eqref{ass::A2} imply the following assertion:
\begin{align} 
&\text{There exists an element $h'\in \mf h$ giving rise to $\mf l=\mf h,~\mf u_\oa^\pm =\mf n_\oa^\pm$ and $\mf u_\ob^\pm = \g_{\pm 1}$}. \label{pre::tridec} \tag{{\bf A3}}
\end{align} 
To see this, let $t\in \h$ such that $$\mf h=\bigoplus_{\Real \alpha(t)=0} \fg_\oa^\alpha,\quad \mf n_\oa^+=\bigoplus_{\Real \alpha(t)>0} \fg_\oa^\alpha, \quad \mf n_\oa^-=\bigoplus_{\Real \alpha(t)<0} \fg_\oa^\alpha,$$ then there exists a positive real number $\epsilon$ such that $h':=H+\epsilon t$ gives the desired subalgebras in \eqref{deflu}. Define $\mf n^\pm:=\mf u^\pm$. We refer to the decomposition $\mf g= \mf n^- \oplus \mf h \oplus \mf n^+$ satisfying  \eqref{pre::tridec} as the (distinguished) triangular decomposition of $\g$.

Also, we  refer to the subalgebras
$\mf b:= \mf h+\mf n^+$ and $\mf b^r:= \mf b_\oa +\g_{-1}$ as {\em standard Borel subalgebra} and {\em reverse Borel subalgebra}, respectively. There subalgebras are all  {\em Borel-Penkov-Serganova subalgebras} in the sense of \cite{PS} and \cite[Section 3.2]{Mu12}; see also \cite[Sections 1.3, 1.4]{CCC} for more details. 

We will make conventional definitions as follows. An element $\alpha\in \h^\ast\backslash\{0\}$ is called a root if $\g^\alpha\neq 0$.  We denote the set of roots by $\Phi\subset \h^\ast$.  Let $\Phi^+$ be the set of roots in $\mf n^+$.  Let $\Phi^+_\oa$ be the positive    system coming from the triangular decomposition \eqref{eq::parad} of $\g_\oa$.  We let $\Pi_0$ be  the corresponding simple system for $\Phi^+_\oa$. We are mainly interested in the following Lie superalgebras:

\begin{exs} \label{ex::ex1} Each of the following quasireductive  Lie superalgebra is of type I-0:
	\begin{enumerate}
		\item[$\bullet$] Reductive Lie algebras $\mf g=\mf g_\oa$. 
		\item[$\bullet$] The general linear Lie superalgebras $\gl(m|n)$; see \cite[Section 1.1.2]{ChWa12}.
		\item[$\bullet$] The ortho-symplectic Lie superalgebras $\mf{osp}(2|2n)$; see \cite[Section 1.1.3]{ChWa12}.
		\item[$\bullet$] The periplectic Lie superalgebras $\mf{pe(n)}$; see Section \ref{sect::273}. 
		\item[$\bullet$]  A semisimple extension
		\begin{align*}
		&\g:=(\mf s\otimes \Lambda (\xi))\rtimes \mf d
		\end{align*} of the Takiff superalgebra induced by a simple Lie algebra $\mf s$  studied in \cite[Section 2.1]{CCo}. 
	\end{enumerate}
\end{exs}

\subsubsection{} The Weyl group $W$ is defined as the Weyl group of $\g_\oa$. We let $w_0\in W$ denote the longest element in $W$. We fix a $W$-invariant bilinear form $\langle ,\rangle$ on $\h^\ast$. Let $\rho$   denote the half-sum of all roots in $\Phi_\oa^+$.  Let $s_\alpha$ be the reflection associated with the   root $\alpha \in  \Phi_\oa^+$. The dot action of $W$ on $\h^\ast$ is defined as $w\cdot \la =w(\la+\rho) -\rho$, for any $\la \in \h^\ast$.

For any $\alpha \in \Pi_0$, we set $\alpha^\vee:= 2\alpha/\langle\alpha, \alpha\rangle$ to be the co-root to $\alpha$; see \cite[Section 0.2]{Hu08}. A weight is called {\em integral} if $\langle \la, \alpha^\vee\rangle \in \Z$, for any $\alpha\in \Phi_\oa^+$. 
We denote by $\mc P \subset\fh^\ast$ the set of integral weights.  A weight $\la$ is said to be  {\em dominant} (resp. {\em anti-dominant}) if $\langle \la+\rho, \alpha^\vee \rangle \notin \Z_{<0}$ (resp. $\langle \la+\rho, \alpha^\vee \rangle \notin \Z_{>0}$), for any $\alpha \in \Phi_\oa^+$. For a given   weight $\la\in \h^\ast$, we let $W_\la$ be the stabilizer subgroup of $\la$ under the dot action of $W$. 

\subsection{BGG category $\mc O$}  \label{sect::pre1}

\subsubsection{} The BGG  category $\mc O$ associated to the triangular decomposition \eqref{pre::tridec} is defined as the category of  finitely-generated $\mf g$-modules on which $\mf h$ acts semisimply and $\mf b$ acts locally finitely. Therefore $\mc O$ is the category of $\g$-modules restricted  to $\mf g_\oa$-modules by $\Res$ in the classical BGG category $\mc O_\oa$ of $\mf g_\oa$-modules as defined in \cite{BGG}. 
Also, we let $\widetilde{\mc F}$ and ${\mc F}$ denote the category of finite dimensional $\g$-modules and $\mf g_\oa$-modules, respectively. 



We have the exact  Kac functor $K(-): \mc O_\oa\rightarrow \mc O$ defined as  $$K(N):=U(\mf g)\otimes_{\mf g_0+\mf g_1} N,$$ for any $N\in \mc O_\oa$ by letting $\g_1$ acts on $N$ trivially.  

For any   $M\in \mc O$, we will freely use   $[M:L]$ to denote the Jordan-H\"older decomposition multiplicities of a simple module $L$ in a composition series of $M$. In addition, we will use $\text{soc}(M)$, $\rad(M)$ to denote the socle and radical of $M$, respectively. The top $M$ is defined as $\text{top}(M):=M/\rad M$.

\subsubsection{} We recall that the category $\mc O$ has a natural structure of highest weight category with respect to the triangular decomposition in \eqref{pre::tridec}. We define the partial order $\le$ on $\fh^\ast$ as the transitive closure of the relations 
\begin{align}
&\lambda\pm\alpha \le\lambda,~\mbox{for $\alpha\in \Phi(\mf n^\mp)$},
\end{align} where $\Phi(\mf n^\mp)$ denotes the set of all roots in $\mf n^\mp$, respectively. For any $\la\in \h^\ast$, we define the Verma module over $\mf g_\oa$ as follows $$M(\la):=U(\mf g_\oa)\otimes_{\fb_{\oa}}\C_\lambda,$$
by  letting $\mf n^+$ acts on  $\C_\la$ trivially.  Also, the corresponding  Verma (super)module over $\g$ is defined as   \begin{align}
&\widetilde{M}(\la) :=  (U(\mf g)\otimes_{\mf b}   \C_\la)\cong K(M(\la)). 
\end{align} 
 The (simple) tops of   $M(\la)$ and $\widetilde{M}(\la)$ are denoted by  $L(\la)$ and $\widetilde{L}(\la)$, respectively. Then $\{L(\la)|~\la\in \h^\ast\}$ (resp. $\{\wdL(\la)|~\la\in \h^\ast\}$) forms the complete list of simple modules in $\mc O_\oa$ (resp. $\mc O$).   For any $\la \in\h^\ast$, the Kac induced module  $K(L(\la))$ is an epimorphic image of  $\widetilde{M}(\la)$. By  \cite[Theorem 3.1]{CCC}  $(\mc O, \leq)$ is a highest weight category with standard objects $\wdM(\la)$. Also, we denote by $P(\la)$ and $\widetilde{P}(\la)$ the projective covers of $L(\la)$ and $\widetilde{L}(\la)$ in $\mc O_\oa$ and ${\mc O},$ respectively. Finally, for $M\in \mc O$, we use $\ch M$ to denote the formal character of $M$.


\subsubsection{}  Define an involution  $\widehat{(\cdot)}:\h^\ast \rightarrow \h^\ast$  by letting  $\widehat \la =-w_0\la$, for $\la \in\h^\ast$.  As   observed in  \cite[Section 1.3]{CCC}, there is an  anti-involution $\sigma$ on $\g$ satisfying that 
\begin{align}  
&\sigma(\mf h)= \mf h,~\sigma(\mf n^+_\oa) =\mf n_\oa^-, \label{eq::gdinv} \\
&(\widehat \la)(h) = \la(\sigma(h)),\text{ for } h\in \h. 
\end{align} 
 
This involution $\sigma$ leads to a natural duality functor $D$ on the category $\mc O$ as follows. For any $M\in \mc O$,    let    $M^\oast$  be the 
restricted dual space of $M$. Then $M^\oast$ is a $\g$-submodule of $\text{Hom}_\C(M,\C)$. We now give a new  $\mf g$-module structure of $M^\oast$ by letting 
\begin{align}
&xf(v) = (-1)^{\overline x \overline f} f(\sigma(x)v),
\end{align}  for any homogeneous elements $x\in \g$, $f\in M^\oast$ and any $v\in M$.  Then denote this resulting  module by $DM$. This gives the endofunctor $D$ on $\mc O$; see \cite[Section 2.2.4]{CC} for more details.

By \cite[Section 3]{CCC}, the functor $D$ intertwines the standard and costandard objects of $\mc O$ with respect to the two highest weight category structures via $\mf b$ and $\mf b^r$. We briefly recall this effect below. 

 For any $\mu\in \h^\ast$,  let $\nbob(\mu)$ be the maximal submodule of the $\Coind_{\mf n_\oa^-+\g_{-1}}^{\mf g}(\C_{\mu})$  on which $\mf h$ acts semisimply and locally finitely; see \cite[Definition 3.2, Theorem 3.1]{CCC}. Next, we put  $\wdM_{\mf b^r}(\mu):= U(\g)\otimes_{\mf b^r}\C_\mu$, and define  $\nbobr(\mu)$ as the maximal submodule of the coinduced module $\Coind_{\mf n_\oa^-+\g_1}^{\mf g}(\C_{\mu})$ on which $\mf h$ acts semisimply and locally finitely. Then by \cite[Proposition 3.4]{CCC}, we have the following isomorphisms
\begin{align}
&D\wdM(\mu)\cong \nbobr(\widehat \mu),~D\nbob(\mu)\cong \wdM_{\mf b^r}(\widehat \mu),\text{ and }D\wdL(\mu)\cong \widetilde L_{\mf b^r}(\widehat \mu), \label{eq::dualVerma}
\end{align}  where $\widetilde L_{\mf b^r}(\widehat \mu)$ denotes the simple highest weight module of highest weight $\widehat \mu$ with respect to  $\mf b^r$.

For any $\alpha \in \Pi_0$, we consider $f_\alpha \in \mf g^{\alpha}_\oa$ to be a non-zero root vector of root $\alpha$. A $\g$-module $M$ is said to be {\em $\alpha$-finite} (resp. {\em $\alpha$-free}) if the action of  $f_\alpha$ on $M$ is locally finite (resp. injective).  
The following useful lemma is a consequence of \cite[Lemma 2.1]{CoM1}
\begin{lem}  \label{lem::3}
	Let $\la\in \h^\ast$ and $\alpha \in \Pi_0$. Then we have  
	\begin{align}
	&\text{$\wdL(\la)$ is $\alpha$-finite} \Leftrightarrow\text{$D\wdL(\la)$ is $\widehat\alpha$-finite}\Leftrightarrow \text{	$\langle \la+\rho, \alpha^\vee  \rangle \in \Z_{> 0}$}.
	\end{align}
\end{lem}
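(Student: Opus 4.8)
The plan is to reduce all three conditions to the standard $\mathfrak{sl}_2$-integrability criterion for simple highest weight modules, which is the content of \cite[Lemma 2.1]{CoM1}, and then to transport that criterion across the duality $D$ by means of \eqref{eq::dualVerma}.

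First I would show that $\wdL(\la)$ is $\alpha$-finite if and only if $\langle\la+\rho,\alpha^\vee\rangle\in\Z_{>0}$. Set $\fs_\alpha:=\g_\oa^{\alpha}\oplus\C\alpha^\vee\oplus\g_\oa^{-\alpha}\cong\mathfrak{sl}_2$ and fix a highest weight vector $v_\la\in\wdL(\la)$, so that $\mf n^+v_\la=0$ and $\alpha^\vee v_\la=\langle\la,\alpha^\vee\rangle v_\la$. If $\wdL(\la)$ is $\alpha$-finite, let $N\geq 1$ be least with $f_\alpha^{N}v_\la=0$; applying $e_\alpha\in\g_\oa^{\alpha}$ gives $0=N(\langle\la,\alpha^\vee\rangle-N+1)\,f_\alpha^{N-1}v_\la$, so $\langle\la,\alpha^\vee\rangle=N-1\in\Z_{\geq 0}$ and hence $\langle\la+\rho,\alpha^\vee\rangle\in\Z_{>0}$. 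Conversely, if $m:=\langle\la,\alpha^\vee\rangle\in\Z_{\geq 0}$, then $f_\alpha^{m+1}v_\la$ is annihilated by $\mf n_\oa^{+}$ (by the $\mathfrak{sl}_2$-relations together with $[\g_\oa^{-\alpha},\g_\oa^{\beta}]=0$ for $\beta\in\Pi_0\setminus\{\alpha\}$) and by $\g_1$ (since $\g_1$ is $\ad\g_0$-stable, so $(\ad f_\alpha)^{j}\g_1\subseteq\g_1$ annihilates $v_\la$); it is thus a highest weight vector of weight $\la-(m+1)\alpha\neq\la$ and so vanishes in the simple module $\wdL(\la)$. Consequently the subspace of $\fs_\alpha$-integrable vectors of $\wdL(\la)$ is a $\g$-submodule — a submodule because $U(\g)$ is a union of finite-dimensional $\ad\fs_\alpha$-stable subspaces, and nonzero because $U(\fs_\alpha)v_\la$ is $(m+1)$-dimensional — whence it is all of $\wdL(\la)$ by simplicity, i.e.\ $\wdL(\la)$ is $\alpha$-finite.

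For the middle equivalence I would invoke \eqref{eq::dualVerma}: $D\wdL(\la)\cong\widetilde L_{\mf b^r}(\widehat\la)$, the simple highest weight module for the reverse Borel $\mf b^r$. Since $\mf b^r$ has the same even part $\mf b_\oa$ as $\mf b$, the argument of the previous paragraph applies verbatim — with $\mf b^r$ in place of $\mf b$ and the simple root $\widehat\alpha\in\Pi_0$ (recall $-w_0$ permutes $\Pi_0$) in place of $\alpha$ — giving that $D\wdL(\la)$ is $\widehat\alpha$-finite if and only if $\langle\widehat\la+\rho,\widehat\alpha^\vee\rangle\in\Z_{>0}$. Finally $\langle\widehat\la+\rho,\widehat\alpha^\vee\rangle=\langle\la+\rho,\alpha^\vee\rangle$: this follows from $\widehat\la+\rho=-w_0(\la+\rho)$ (because $w_0\rho=-\rho$), from $(-w_0\alpha)^\vee=-w_0(\alpha^\vee)$, and from the $W$-invariance of $\langle\,,\rangle$. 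Combining these equivalences finishes the proof.

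The point requiring care — as opposed to routine checking — is that the $\mathfrak{sl}_2$-argument, classical for $\g_\oa$-modules, really does survive for the $\g$-module $\wdL(\la)$ and, with $\g_{-1}$ in place of $\g_1$, for $\widetilde L_{\mf b^r}(\widehat\la)$: this hinges entirely on $\g_{\pm 1}$ being $\ad\g_0$-stable, which leaves both the singular-vector computation and the $\ad\fs_\alpha$-local finiteness of $U(\g)$ undisturbed by the super structure. If one is content to quote \cite[Lemma 2.1]{CoM1} directly for $\wdL(\la)$ and for $\widetilde L_{\mf b^r}(\widehat\la)$, the proof collapses to the short root computation of the third paragraph.
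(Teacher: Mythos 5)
Your proposal is correct. The paper gives no argument beyond the remark that the lemma "is a consequence of \cite[Lemma 2.1]{CoM1}", and your proof is precisely a self-contained derivation of that criterion — the standard $\mathfrak{sl}_2$ singular-vector computation, checked to survive the presence of $\g_{\pm1}$ because they are $\ad\,\g_0$-stable — together with the routine transport across $D$ via \eqref{eq::dualVerma} and the identity $\langle\widehat\la+\rho,\widehat\alpha^\vee\rangle=\langle\la+\rho,\alpha^\vee\rangle$; this is exactly what the paper's citation is meant to supply.
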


\subsection{The periplectic Lie superalgebra $\pn$}  \label{sect::273}
In this subsection, we introduce the periplectic Lie superalgebras $\pn$; see also \cite[Section 1.1]{ChWa12}  for more details.

\subsubsection{Matrix realization}
For any positive integer $n$, the standard  matrix realization of the periplectic Lie superalgebra 
$\pn$ inside the general linear Lie superalgebra  
$\mathfrak{gl}(n|n)$ is given by
\begin{align}\label{plrealization}
\pn:=
\left\{ \left( \begin{array}{cc} A & B\\
C & -A^t\\
\end{array} \right)\| ~ A,B,C\in \C^{n\times n},~\text{$B$ symmetric and $C$ skew-symmetric} \right\}.
\end{align}  The type-I gradation of $\pn$ inherits that of $\gl(n|n)$, namely,  
\begin{align*}
\pn_1:=
\{\begin{pmatrix}
0 & B \\
0 & 0
\end{pmatrix}|B^t=B\}\quad\mbox{and}\quad \pn_{-1}:=
\{\begin{pmatrix}
0 & 0 \\
C & 0
\end{pmatrix}|C^t=-C\}.
\end{align*}

The standard Cartan subalgebra $\mf h    \subset \mf g_\oo$ consists of diagonal matrices. 
Let $E_{ab}$ denote the elementary matrix in $\mathfrak{gl}(n|n)$, for $1\leq a,b \leq 2n$. We denote by   $\{\vare_1, \vare_2, \ldots, \vare_n\}$ the dual basis of $\mf h^*$  with respect to the following  standard basis of $\mf h$ 
\begin{align}
\{H_i:=E_{i,i}-E_{n+i,n+i}|~1\leq i \leq n \}\subset \pn. \label{eq::cartan}
\end{align}  
In particular, we have 
\begin{align}\label{eqroots}
&\Phi=\{\epsilon_i-\epsilon_j,~\pm(\vare_i+\vare_j)\,|\, 1\le i\neq j\le n\}\cup \{2\vare_i|~1\leq i\leq n\}, \\
&\Phi_\oa^+=\{\epsilon_i-\epsilon_j\,|\, 1\le i<j\le n\}, \\
&\Pi_0=\{\epsilon_i-\epsilon_{i+1}\,|\, 1\le i\le n-1\}.
\end{align} 
The Weyl group $W$ is isomorphic to the symmetric group on $n$ letters.  We fix a non-degenerate $W$-invariant bilinear form $\langle\cdot, \cdot\rangle: \mf h^*\times \mf h^* \rightarrow \C$ by letting $\langle\vare_i, \vare_j\rangle =\delta_{ij}$, for all $1\leq i,j\leq n$.  Fix the Borel subalgebra $\fb_{\oa}$ of $\g_{\oa}\cong \mathfrak{gl}(n)$
 consisting of matrices in \eqref{plrealization} with $B=C=0$ and
 $A$ upper triangular. Without loss of generality, we shift the Weyl vector $\rho$ of $\g_\oa$ by letting  
$$\rho:= (n-1)\vare_1 +(n-2)\vare_2+\cdots +\vare_{n-1}.$$ Also, we define $\omega_n:=\vare_1+\vare_2+\cdots+\vare_n$. For any $k\in \C$, we denote by $\C_{k\omega_n}$ the one-dimension $\g$-module of weight $k\omega_n$. Note that  $\C_{k\omega_n} \otimes -$ leads to an auto-equivalence of $\mc O$; see \cite[Section 5.10]{CC}.

\subsubsection{Odd reflections}
In this subsection, we recall the notion of odd reflections for $\pn$ from \cite[Lemma 1 and Section 2.2]{PS89}. For a given Borel subalgebra $\mf b'$, we denote the set of  roots of $\mf b'$ (i.e. non-zero weights of $\mf b'$) by $\Phi(\mf b')$. Consider the following sequence
\begin{align}
&\{\alpha_0,\al_1,\ldots,\al_{k-1}\} = \{2\vare_1,\vare_1+\vare_2,2\vare_2, \vare_1+\vare_3,\vare_2+\vare_3,2\vare_3,\ldots,\vare_{n-1}+\vare_n, 2\vare_n\}, \label{eq::oddrefeq0}
\end{align}where $k= \frac{n(n+1)}{2}.$ Let $\mf b^r=\mf b^0, \mf b^1, \mf b^2,\ldots,\mf b^k=\mf b$ be the corresponding sequence of Borel subalgebras in the sense of \cite[Section 2.2]{PS89}. More precisely, $\mf b_\oa^\ell = \mf b_\oa$ for each $0\leq \ell\leq k$ and 
	\begin{align}
&\Phi(\mf b^{\ell+1})=\begin{cases} \Phi(\mf b^\ell)\backslash\{-\alpha_\ell\}\cup\{\alpha_\ell\} &\mbox{ if $\alpha_\ell$ is of the form $\vare_p+\vare_q$ with $p\neq q$,}\\
 \Phi(\mf b^\ell)\cup \{\alpha_\ell\}&\mbox{ otherwise.} 
\end{cases} \label{eq::oddrefeq}
\end{align}
Following \cite{PS89}, two Borel subalgebras $\mf b^{\ell+1}$ and $\mf b^\ell$ are said to be {\em connected by an odd reflection associated to $\alpha_\ell$}, in case  they satisfy the first assumption in \eqref{eq::oddrefeq}, and {\em connected by an inclusion} otherwise. Using the sequence \eqref{eq::oddrefeq0}, we give the rule of change of highest weights of an irreducible module under  the odd reflection and inclusion in following lemma, which is taken from \cite[Lemma 1 and Section 2.2]{PS89}.
\begin{lem}\label{lem::oddref}
Let $\la =\sum_{i=1}^\n \la_i \vare_i\in \mf h^\ast$. Suppose that $$\la^0,\la^1,\la^2,\ldots, \la^k\in \h^\ast$$ are $\mf b^0,\mf b^1,\mf b^2,\ldots, \mf b^k$-highest weights of $\wdL(\la)$. For each $0\leq \ell\leq k$, we set $\la^\ell = \sum_{i=1}^n \la_i^\ell\vare_i$.  Then we have the following rules:
\begin{itemize}
	\item[(A)] For each $\alpha_\ell= \vare_p+\vare_q$ with $p\neq q$, the $\mf b^{\ell+1}$-highest weight of $\wdL(\la)$ is given by 
		\begin{align}
	&\la^{\ell+1}=\begin{cases} \la^\ell+\alpha_\ell &\mbox{ if $\la^\ell_p\neq \la^\ell_q$,}\\
	\la^\ell&\mbox{ otherwise.} 
	\end{cases} 
	\end{align}
	\item[(B)] If $\alpha_\ell =2\vare_p$, then the $\mf b^{\ell+1}$-highest weight of $\wdL(\la)$ is given by $\la^{\ell+1}=\la^\ell$.
\end{itemize}
	\end{lem}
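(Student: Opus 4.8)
The plan is to derive the change-of-highest-weight rules directly from the definition of odd reflection and inclusion, using the standard Penkov--Serganova analysis of how a simple highest weight module decomposes with respect to a root-$\mathfrak{sl}(2)$ or Heisenberg sub(super)algebra attached to $\alpha_\ell$. First I would fix notation: for a Borel $\mathfrak b^\ell$ with highest weight vector $v_\ell \in \wdL(\la)$ of weight $\la^\ell$, and a positive odd root $\alpha_\ell$ of $\mathfrak b^\ell$, let $e_{\alpha_\ell}\in \g^{\alpha_\ell}$ and $e_{-\alpha_\ell}\in\g^{-\alpha_\ell}$ be root vectors. The key structural fact is that passing from $\mathfrak b^\ell$ to $\mathfrak b^{\ell+1}$ swaps the roles of $e_{\alpha_\ell}$ and $e_{-\alpha_\ell}$ (for a genuine odd reflection, i.e. when $\alpha_\ell=\vare_p+\vare_q$, $p\neq q$), or merely adds $e_{\alpha_\ell}$ to the nilpotent radical without removing anything (for an inclusion, i.e. when $\alpha_\ell = 2\vare_p$, where $-\alpha_\ell$ was never a root of $\mathfrak b^\ell$ in the first place, since all the $2\vare_i$ roots only ever appear with a $+$ sign along the sequence \eqref{eq::oddrefeq0}).

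For case (B), $\alpha_\ell = 2\vare_p$: here $\mathfrak b^{\ell+1}$ and $\mathfrak b^\ell$ are connected by an inclusion, so $\Phi(\mathfrak b^{\ell+1}) = \Phi(\mathfrak b^\ell)\cup\{\alpha_\ell\}$ with $\mathfrak b^\ell \subset \mathfrak b^{\ell+1}$. Then the $\mathfrak b^\ell$-highest weight vector $v_\ell$ is automatically also annihilated by $\mathfrak n^{+,\ell+1}$, because one checks that $e_{\alpha_\ell}v_\ell = 0$ already — the weight $\la^\ell + 2\vare_p$ cannot occur in $\wdL(\la)$ for weight reasons along this particular sequence (or more directly, since $e_{\alpha_\ell}$ lies in the larger positive part and $v_\ell$ generates the module, any nonzero $e_{\alpha_\ell}v_\ell$ would force a weight above the $\mathfrak b^{\ell+1}$-maximal one, contradiction once we know $\wdL(\la)$ has a $\mathfrak b^{\ell+1}$-highest weight vector). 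Hence $\la^{\ell+1} = \la^\ell$. I would simply cite \cite[Lemma 1, Section 2.2]{PS89} for the general inclusion statement and specialize.

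For case (A), $\alpha_\ell = \vare_p+\vare_q$ with $p\neq q$: this is the genuine odd reflection. The new highest weight vector is $e_{-\alpha_\ell}v_\ell$ if this is nonzero, in which case $\la^{\ell+1} = \la^\ell - \alpha_\ell$... but note the statement instead says $\la^{\ell+1} = \la^\ell + \alpha_\ell$, so I would need to be careful about which of $\mathfrak b^\ell, \mathfrak b^{\ell+1}$ has $\alpha_\ell$ as a root: by \eqref{eq::oddrefeq}, $\mathfrak b^{\ell+1}$ has $+\alpha_\ell$ and $\mathfrak b^\ell$ has $-\alpha_\ell$. So relative to $\mathfrak b^\ell$, $-\alpha_\ell$ is positive and the $\mathfrak b^{\ell+1}$-highest weight vector is obtained by acting with the root vector for $+\alpha_\ell$ (which is negative for $\mathfrak b^\ell$), giving weight $\la^\ell + \alpha_\ell$ — consistent with (A). The nonvanishing of $e_{\alpha_\ell}v_\ell$ is governed by the rank-one computation inside the Heisenberg/$\mathfrak{sl}(2|1)$-type subalgebra generated by the root spaces $\g^{\pm\alpha_\ell}$ together with the relevant part of $\mathfrak h$: since $\alpha_\ell$ is odd and isotropic-in-the-right-sense for $\pn$ (the bilinear form pairing is what controls $[e_{\alpha_\ell}, e_{-\alpha_\ell}]$ and then the $\mathfrak{sl}_2$-type action), one gets $e_{\alpha_\ell}v_\ell \neq 0$ precisely when the relevant coefficient $\langle \la^\ell + \rho^\ell, \alpha_\ell^\vee\rangle$-type quantity is nonzero, which here reduces to $\la^\ell_p \neq \la^\ell_q$; when $\la^\ell_p = \la^\ell_q$ the vector $e_{\alpha_\ell}v_\ell$ vanishes and $v_\ell$ itself remains a $\mathfrak b^{\ell+1}$-highest weight vector, so $\la^{\ell+1} = \la^\ell$.

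The main obstacle I anticipate is bookkeeping: making sure the sign conventions and the direction of each reflection/inclusion along the explicit chain \eqref{eq::oddrefeq0} are tracked consistently, and verifying the precise vanishing criterion $\la^\ell_p = \la^\ell_q$ rather than a $\rho$-shifted version — the asymmetry of $\pn$ (its Killing form is odd, $\rho$ is defined only up to the conventions fixed above) means one cannot blindly import the $\gl(m|n)$ formulas. Since the entire statement is explicitly attributed to \cite[Lemma 1 and Section 2.2]{PS89}, the cleanest route is to quote their general odd-reflection/inclusion rules and then do the short specialization to the chain \eqref{eq::oddrefeq0}, checking the two bullet points by the rank-one argument sketched above; I would present it as such rather than reproving the Penkov--Serganova machinery from scratch.
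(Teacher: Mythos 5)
The paper offers no proof of this lemma at all --- it is imported verbatim from \cite[Lemma 1 and Section 2.2]{PS89} --- so your plan of quoting the Penkov--Serganova odd-reflection/inclusion rules and specializing them to the chain \eqref{eq::oddrefeq0} is exactly the paper's approach. Your underlying rank-one sketch is also correct: for $\alpha_\ell=\vare_p+\vare_q$ one has $[e_{-\alpha_\ell},e_{\alpha_\ell}]=H_q-H_p$, which yields precisely the un-shifted criterion $\la^\ell_p\neq\la^\ell_q$, and you track the direction of the reflection correctly so that the weight increases by $\alpha_\ell$.
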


For a given  $\la\in \h^\ast$ and $0\leq \ell\leq k$, we denote by $\wdL_{\mf b^\ell}(\la)$ the simple module of $\mf b^\ell$-highest weight $\la$. 

\begin{ex}\label{ex::example3}
 Consider a weight $\la=\sum_{i=1}^n\la_i\vare_i\in \bigoplus_{i=1}^n \Z\vare_i$ such that $\la_1=\la_2$ and  $\la_2<\la_3<\la_4<\cdots <\la_n$. By a direct computation, it follows that $$\wdL_{\mf b^r}(\la) = \wdL(\la+(n-1)\omega_n -\vare_1 -\vare_2),$$
 namely, $\la^{\ell+1} = \la^\ell+\alpha_\ell$ at each step given in Part (A) of  Lemma \ref{lem::oddref} for Borel subalgebras $\mf b^\ell$ and $\mf b^{\ell+1}$ that are connected by an odd reflection $\alpha_\ell \neq \vare_1+\vare_2$.  
\end{ex}


  \section{The Jantzen middles} \label{sect::Jmiddle}
  We continue to assume that $\mf g$ is a quasireductive Lie superalgebra of type I-0 (i.e., $\g$ satisfies  assumptions \eqref{ass::A1}-\eqref{pre::tridec}). Recall that $\mc O^{\Z}$ denotes the full subcategory of $\mc O$ consisting of modules of integral weights. Similarly, we define $\mc O^\Z_\oa\subset \mc O_\oa$.  We will follow   \cite[Section 6]{Co16} and define the Jantzen middles as the radicals of twisted simple modules. Before giving the precise definitions, we recall the twisting functors as follows.


  \subsection{Twisting functors}

Let $\alpha\in \Pi_0$ and $s:=s_\alpha$, we recall the corresponding  Arkhipov's twisting functor  $T_s$ (resp. $T_s^\oa$) on $\mc O$ (resp. $\mc O_\oa$)  introduced in \cite[Section 3.6]{CMW} and  \cite[Section 5]{CoM1}. This functor was originally defined by Arkhipov in \cite{Ark1} and further studied in \cite{AS, AL, CMW, KM2, MaSt}. 

Let $G_s$ be the right adjoint to $T_s$. Let us recall some basic properties of $T_s$; see also \cite{AS, CMW}, \cite[Section 5]{CoM1} and \cite[Theorem 4.5]{CC}.
\begin{itemize}
	\item[(1)] The functor $T_s$ is right exact. 
	\item[(2)] The functor $G_s$ is left exact and  isomorphic to the  Joseph's version of Enright completion functor as introduced in \cite[Section 4.2]{CC}. 
	\item[(3)] Let $G_s^\oa$ denote the Joseph's     Enright completion functor  on $\mc O_\oa$   introduced in \cite[Section 2]{Jo82}. Then  we have  \begin{align*}
	&\Ind \circ T_s^\oa=T_s\circ \Ind \text{ and }~\Res \circ T_s=T^\oa_s\circ \Res.\\
	&\Ind \circ G_s^\oa=G_s\circ \Ind \text{ and }~\Res \circ G_s=G^\oa_s\circ \Res.
		\end{align*}  
	\item[(4)] We have $D\circ G_{s_{\widehat \alpha}}\circ D \cong T_{s_\alpha}$ on $\mc O^\Z$.
	\item[(5)] Denote the left derived functor of $T_s$  by $\mc LT_s$. Then $\mc L_iT_s=0,$ for $i>1$. For any $M\in \mc O$, the $\mc L_1T_s(M)$ is the maximal $\alpha$-finite submodule of $M$. If $\wdL(\la)$ is   $\alpha$-finite,  then we have  $T_s\wdL(\la) =0$ and  $\mc L_1T_s\wdL(\la) = \wdL(\la)$. 
	\item[(6)]    Denote the right derived functor of $G_s$  by $\mc RG_s$. Then $\mc LT_s$ is an auto-equivalence of the bounded derived category $\mc D^b(\mc O)$ with $\mc RG_s$ as its inverse. 
\end{itemize}




  Since   twisting functors satisfy the braid relations, see, for example \cite{KM2,CoM1}, it follows that for any $w\in W$ with a reduced expression $w=s_{\alpha_1}s_{\alpha_2}\cdots s_{\alpha_k}$  ($\alpha_1,\ldots , \alpha_k\in \Pi_0$) the associated twisting functor  $T_{w}:=T_{s_{\alpha_1}}\circ T_{s_{\alpha_2}}\circ \cdots \circ T_{s_{\alpha_k}}$ is well-defined.  We use $T^\oa_w$  to denote the corresponding twisting functor  on $\mc O_\oa$. Then we have  $\Res \circ T_w=T^\oa_w\circ \Res$. The completion functors $G_w$ and $G_w^\oa$ are defined in similar fashion.

  \subsection{The Jantzen middles for type-I Lie superalgebras}	\label{Sect::Jimddlesub1}
  Let $\alpha \in \Pi_0$.  Following  \cite[Section 6]{Co16}, we define the {\em Jantzen middle $U_\alpha(\la)$ for $\wdL(\la)$ associated with $\alpha$} as the radical of $T_{s_\alpha}\wdL(\la)$, for any $\la \in \mf h^\ast$.  The following   realization of Jantzen middles is an analogue of \cite[Proposition 6.2]{Co16}, where the case of basic classical  Lie superalgebras were considered. 
  
  \begin{prop} \label{prop::prop10} Consider $\g$  a  quasireductive Lie superalgebra  of type I-0.  
  	Suppose that $\la \in \mc P$ such that $\wdL(\la)$ is $\alpha$-free. Then $T_{s_\alpha} \wdL(\la)$ has a simple top isomorphic $\wdL(\la)$. Furthermore, the Jantzen middle $U_\alpha(\la)$ is isomorphic to the largest $\alpha$-finite quotient of $\rad{\widetilde P(\la)}$. 
  \end{prop}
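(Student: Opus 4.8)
The plan is to mimic the strategy of \cite[Proposition 6.2]{Co16}, transporting it to the type I-0 setting via the duality functor $D$ and the compatibility properties (1)--(6) of the twisting functors listed above. First I would reduce the statement about $T_{s_\alpha}\wdL(\la)$ to a statement about the Enright completion functor $G_{s_{\widehat\alpha}}$ applied to $\wdL(\widehat\la)$, using property (4), i.e. $T_{s_\alpha}\cong D\circ G_{s_{\widehat\alpha}}\circ D$ on $\mc O^\Z$, together with $D\wdL(\mu)\cong \widetilde L_{\mf b^r}(\widehat\mu)$ from \eqref{eq::dualVerma}. Since $\la$ integral and $\wdL(\la)$ is $\alpha$-free is equivalent, by Lemma \ref{lem::3}, to $\langle\la+\rho,\alpha^\vee\rangle\in\Z_{<0}$ (the $\alpha$-finite case being $\Z_{>0}$), and this in turn matches the condition under which $G_{s_{\widehat\alpha}}$ acts as the Enright completion of a module which is not already $\widehat\alpha$-complete. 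Property (3) lets me compare with the classical $G_{s_{\widehat\alpha}}^\oa$ on $\mc O_\oa$ via $\Res$, where the analogous facts about Enright completion (simple socle/top, injectivity of the adjunction morphism on the relevant module) are known from \cite{Jo82}; the functor $K(-)$ being exact then propagates the head/socle computation back up to $\g$.

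Concretely, the first key step is to show $T_{s_\alpha}\wdL(\la)$ has simple top $\wdL(\la)$. I would establish this by: (a) observing that the adjunction unit $\wdL(\la)\to G_{s_\alpha}T_{s_\alpha}\wdL(\la)$ — equivalently the counit for the dual pair on $\mc O^\Z$ — gives a nonzero map whose image identifies a copy of $\wdL(\la)$ in the top of $T_{s_\alpha}\wdL(\la)$; (b) checking that $\wdL(\la)$ is the \emph{only} composition factor that can occur in the top, because any simple quotient $\wdL(\mu)$ of $T_{s_\alpha}\wdL(\la)$ must be $\alpha$-free (as $T_{s_\alpha}$ kills $\alpha$-finite simples and lands in the category of modules with no $\alpha$-finite quotient by right-exactness and property (5)), and a weight/character argument — pushing through $\Res$ and the classical statement — forces $\mu=\la$. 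This pins down $\mathrm{top}(T_{s_\alpha}\wdL(\la))\cong\wdL(\la)$, so $U_\alpha(\la)=\rad T_{s_\alpha}\wdL(\la)$ is well-defined as a proper submodule.

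The second key step identifies $U_\alpha(\la)$ with the largest $\alpha$-finite quotient of $\rad\widetilde P(\la)$. Here I would use the right-exact sequence obtained by applying $T_{s_\alpha}$ to $\rad\widetilde P(\la)\hookrightarrow \widetilde P(\la)\twoheadrightarrow \wdL(\la)$, together with $\mc L_1 T_{s_\alpha}$: by property (5), $\mc L_1T_{s_\alpha}\widetilde P(\la)=0$ since a projective has no $\alpha$-finite submodule when $\la$ is $\alpha$-free (this needs the block-structure observation that $\widetilde P(\la)$ has a Verma flag all of whose subquotients are $\alpha$-free, which I would deduce from \cite[Theorem 4.5]{CC} on how $T_s$ interacts with standard objects), and $T_{s_\alpha}\widetilde P(\la)$ is projective hence its radical computation is controlled by BGG reciprocity. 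Then the long exact sequence for $\mc LT_{s_\alpha}$ reads $0\to \mc L_1T_{s_\alpha}\wdL(\la)\to T_{s_\alpha}\rad\widetilde P(\la)\to T_{s_\alpha}\widetilde P(\la)\to T_{s_\alpha}\wdL(\la)\to 0$, and chasing tops — using that $T_{s_\alpha}\widetilde P(\la)\twoheadrightarrow T_{s_\alpha}\wdL(\la)$ has kernel with $\alpha$-finite top equal to the image of $\mc L_1 T_{s_\alpha}\rad\widetilde P(\la)=$ (largest $\alpha$-finite submodule of $\rad\widetilde P(\la)$) — realizes $U_\alpha(\la)$ as claimed. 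Equivalently, and perhaps cleaner, I would dualize via $D$ and use that $G_{s_{\widehat\alpha}}$ applied to the injective hull $\widetilde I_{\mf b^r}(\widehat\la)=D\widetilde P(\la)$ computes the cosocle data directly.

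The main obstacle I anticipate is step (a) of the second part: controlling $\mc L_1 T_{s_\alpha}$ on $\widetilde P(\la)$ and on $\rad\widetilde P(\la)$, i.e. verifying that no $\alpha$-finite submodule sneaks into the projective cover. In the semisimple Lie algebra case this is automatic from the linkage/Verma-flag structure, but for type I-0 superalgebras one must check that $\alpha$-freeness is a block-constant property on the relevant integral block — i.e. that $\la$ being $\alpha$-free forces every $\wdL(\mu)$ linked to $\wdL(\la)$ to be $\alpha$-free as well. This should follow because the odd reflections and the $\g_{\pm1}$-action shift weights only by roots in $\Phi\setminus\Phi_\oa$, which do not change the $\Z_{<0}$ vs $\Z_{>0}$ dichotomy of $\langle\mu+\rho,\alpha^\vee\rangle$ once $|\langle\la+\rho,\alpha^\vee\rangle|$ is large enough, but near the wall one has to argue carefully — possibly invoking \cite[Lemma 2.1]{CoM1} / Lemma \ref{lem::3} and the compatibility $\Res\circ T_s = T_s^\oa\circ\Res$ to transfer the obstruction to $\mc O_\oa$, where it vanishes by the classical theory of Enright completions.
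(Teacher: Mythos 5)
The paper gives no proof of Proposition \ref{prop::prop10}; it is stated as an analogue of \cite[Proposition 6.2]{Co16}, whose proof in turn defers to \cite[Section 6]{AS}. Your overall strategy --- transporting that argument through properties (1)--(6) of the twisting functors, the duality $D$, and the compatibilities with $\Res$ and $K(-)$ --- is therefore the intended one. However, the step you flag as the ``main obstacle'' is a misdiagnosis, and the fix you propose for it is false. The vanishing of $\mathcal{L}_1T_{s_\alpha}$ on $\widetilde P(\la)$ and on $\rad\widetilde P(\la)$ has nothing to do with $\alpha$-freeness being constant on a block; that property fails already for $\mf{sl}_2$, and for $\mf{pe}(2)$ the block of Example \ref{ex::exm11} contains the $\alpha$-free simple $\wdL(2\vare_2)$ alongside the $\alpha$-finite simples $\wdL(\omega_2)$ and $\wdL(0)$. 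What is actually needed is only that $\widetilde P(\la)$ has no nonzero $\alpha$-finite \emph{submodule}, and this holds for every $\la$: $\widetilde P(\la)$ has a Verma flag, each $\wdM(\mu)\cong U(\mf n^-)\otimes\C_\mu$ is free over $\C[f_\alpha]$ by PBW, and $f_\alpha$-torsion-freeness passes to extensions and submodules. You are conflating ``$\alpha$-free module'' (injectivity of $f_\alpha$) with ``simple module of $\alpha$-free highest weight''; Verma modules are always $\alpha$-free as modules even when their tops are $\alpha$-finite.

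A second genuine gap is in step (a) of the first part: the adjunction unit $\wdL(\la)\to G_{s_\alpha}T_{s_\alpha}\wdL(\la)$ is injective and so exhibits $\wdL(\la)$ in the \emph{socle} of $G_{s_\alpha}T_{s_\alpha}\wdL(\la)$; it does not produce a surjection $T_{s_\alpha}\wdL(\la)\twoheadrightarrow\wdL(\la)$. The occurrence of $\wdL(\la)$ in the top should instead come from $T_{s_\alpha}\wdL(\la)\neq 0$ (derived equivalence plus $\mathcal{L}_1T_{s_\alpha}\wdL(\la)=0$) together with the exclusion of all other simples; and to get a \emph{simple} top you still need the multiplicity bound $\dim\Hom(T_{s_\alpha}\wdL(\la),\wdL(\mu))\le\dim\Hom(T_{s_\alpha}\widetilde P(\la),\wdL(\mu))=\dim\Hom(\widetilde P(\la),G_{s_\alpha}\wdL(\mu))=[G_{s_\alpha}\wdL(\mu):\wdL(\la)]$, which via $G_{s_\alpha}\cong D\circ T_{s_{\widehat\alpha}}\circ D$ and $\Res\circ T_{s_{\widehat\alpha}}=T^\oa_{s_{\widehat\alpha}}\circ\Res$ reduces to the classical character fact that the only $\alpha$-free constituent of the twist of an $\alpha$-free simple is that simple, with multiplicity one; your step (b) gestures at this but the quantitative part is the real content. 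Two smaller inaccuracies: the claim that $T_{s_\alpha}M$ has no $\alpha$-finite quotient does not follow from property (5), which concerns submodules, but from adjunction and the vanishing of $G_{s_\alpha}$ on $\alpha$-finite simples; and the projectivity of $T_{s_\alpha}\widetilde P(\la)$ is true only because $\wdL(\la)$ is $\alpha$-free (one needs $\Ext^1(T_{s_\alpha}\widetilde P(\la),M)\cong\Hom(\widetilde P(\la),\mathcal{R}^1G_{s_\alpha}M)=0$), so it cannot be invoked without justification.
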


   \subsubsection{Semisimplicity of Jantzen middles}  For  $\g$ a Lie superalgebras with the category $\mc O$ that admits a simple-preserving duality,  the structure of $T_{s_\alpha} \wdL(\la)$ has been studied in \cite[Theorem 5.12, Corollary 5.14]{CoM1}.  It is shown that $\text{soc} (U_\alpha(\la))\cong \text{top} (U_\alpha(\la))$ in the case when $\g$ is either reductive or basic classical; see   \cite[Theorem 6.3]{AS} and \cite[Corollary 5.14]{CoM1}.  For $\g = \mf{pe}(2)$, we give an example below showing that the socle and radical of a Jantzen middle are not necessarily isomorphic. Instead, we have the following proposition, which is an analogue of \cite[Part (3) of Thoerem 6.3]{AS} and \cite[Part (ii) of Theorem 5.12]{CoM2} for any Lie superalgebra $\g$ of type I-0. In particular, this applies to the case of   $\pn$. 
  
  \begin{prop} \label{prop::100} 	Suppose that $\alpha\in \Pi_0$ and $\la \in \mc P$ such that $\wdL(\la)$ is $\alpha$-free. Then  we have 
  	\begin{align}
  	&\emph{soc} (U_\alpha(\la)) \cong \bigoplus_{\widetilde L(\nu):\emph{~$\alpha$-finite}} \widetilde L(\nu)^{\oplus \emph{dim}\Ext^1_{\mc O}(\widetilde L(\nu), \widetilde L(\la))}.\\
  	&\emph{top} (U_\alpha(\la))  \cong \bigoplus_{\widetilde L(\nu):\emph{~$\alpha$-finite}} \widetilde L(\nu)^{\oplus \emph{dim}\Ext^1_{\mc O}(\widetilde L(\la),\widetilde L(\nu))}.
  	\end{align}
  \end{prop}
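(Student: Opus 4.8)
The plan is to compute the socle and top of $U_\alpha(\la)$ by relating $\Ext^1$-groups in $\mc O$ with appropriate $\Hom$-spaces involving the twisting functor $T_{s_\alpha}$, following the strategy of \cite[Theorem 6.3]{AS} and \cite[Theorem 5.12]{CoM1}. By Proposition \ref{prop::prop10}, $T_{s_\alpha}\wdL(\la)$ has simple top $\wdL(\la)$ and $U_\alpha(\la)$ is its radical; moreover every composition factor $\wdL(\nu)$ of $U_\alpha(\la)$ must be $\alpha$-finite, since $T_{s_\alpha}$ kills $\alpha$-finite simples and the $\alpha$-free simples contribute to the top. So the sums in the statement range over the correct set. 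First I would establish the \emph{top} formula. Since $U_\alpha(\la)=\rad T_{s_\alpha}\wdL(\la)$, for an $\alpha$-finite simple $\wdL(\nu)$ the multiplicity of $\wdL(\nu)$ in $\text{top}(U_\alpha(\la))$ equals $\dim\Hom_{\mc O}(\rad T_{s_\alpha}\wdL(\la),\wdL(\nu))=\dim\Ext^1_{\mc O}(T_{s_\alpha}\wdL(\la),\wdL(\nu))$, using that $\Hom_{\mc O}(T_{s_\alpha}\wdL(\la),\wdL(\nu))=0$ for $\nu\neq\la$ (the top being $\wdL(\la)$). Then I would use the derived adjunction between $\mc LT_{s_\alpha}$ and $\mc RG_{s_\alpha}$ from property (6): since $\mc L_1 T_{s_\alpha}\wdL(\la)=0$ because $\wdL(\la)$ is $\alpha$-free (property (5)), we have $\mc LT_{s_\alpha}\wdL(\la)\cong T_{s_\alpha}\wdL(\la)$ in $\mc D^b(\mc O)$, and the hypercohomology spectral sequence gives $\Ext^1_{\mc O}(T_{s_\alpha}\wdL(\la),\wdL(\nu))\cong \Hom_{\mc D^b(\mc O)}(\mc LT_{s_\alpha}\wdL(\la),\wdL(\nu)[1])\cong\Hom_{\mc D^b(\mc O)}(\wdL(\la),\mc RG_{s_\alpha}\wdL(\nu)[1])$.

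The next step is to identify $\mc RG_{s_\alpha}\wdL(\nu)$ for $\alpha$-finite $\nu$. The key input is property (5): the left derived $\mc L_1 T_{s_\alpha}$ of an $\alpha$-finite simple returns the simple itself and $T_{s_\alpha}\wdL(\nu)=0$, so $\mc LT_{s_\alpha}\wdL(\nu)\cong\wdL(\nu)[1]$; applying the inverse auto-equivalence $\mc RG_{s_\alpha}$ gives $\mc RG_{s_\alpha}\wdL(\nu)[1]\cong\wdL(\nu)$, i.e. $\mc RG_{s_\alpha}\wdL(\nu)\cong\wdL(\nu)[-1]$. Substituting, $\Ext^1_{\mc O}(T_{s_\alpha}\wdL(\la),\wdL(\nu))\cong\Hom_{\mc D^b(\mc O)}(\wdL(\la),\wdL(\nu))=0$ unless $\nu=\la$ — but $\la$ is $\alpha$-free, so it does not appear. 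That cannot be right, so the correct bookkeeping must instead pass through the first extension rather than a plain Hom; concretely one should write $\Ext^1_{\mc O}(T_{s_\alpha}\wdL(\la),\wdL(\nu))$ as a term in the Grothendieck spectral sequence for $\mathbb R\Hom\circ\mc RG_{s_\alpha}$ applied to a two-step resolution, and the surviving contribution is $\Ext^1_{\mc O}(\wdL(\la),\mc R^0 G_{s_\alpha}\wdL(\nu))$. Here $\mc R^0 G_{s_\alpha}\wdL(\nu)=G_{s_\alpha}\wdL(\nu)$, and since $G_{s_\alpha}$ is the Enright completion functor (property (2)) which on an $\alpha$-finite simple produces a module with simple socle $\wdL(\nu)$ and the same $\alpha$-finite composition factors — in fact for $\alpha$-finite $\wdL(\nu)$ one has $G_{s_\alpha}\wdL(\nu)\cong\wdL(\nu)$ as it is already complete — so the formula collapses to $\Ext^1_{\mc O}(\wdL(\la),\wdL(\nu))$, giving the \emph{top} statement. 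The \emph{socle} statement then follows by applying the duality functor $D$: by property (4), $D\circ G_{s_{\widehat\alpha}}\circ D\cong T_{s_\alpha}$ on $\mc O^\Z$, and $D\wdL(\la)\cong\widetilde L_{\mf b^r}(\widehat\la)$ by \eqref{eq::dualVerma}; one shows $\text{soc}(U_\alpha(\la))\cong D(\text{top}(U_{\widehat\alpha}(\widehat\la)))$ after transporting along $D$, and $D$ exchanges $\Ext^1_{\mc O}(\wdL(\la),\wdL(\nu))$ with $\Ext^1_{\mc O}(\wdL(\nu),\wdL(\la))$ (up to the relabelling $\nu\mapsto\widehat\nu$, $\la\mapsto\widehat\la$, which is a bijection on $\alpha$-finite simples by Lemma \ref{lem::3}), yielding the displayed socle formula.

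The main obstacle I anticipate is the homological bookkeeping in the passage from $\Ext^1_{\mc O}(T_{s_\alpha}\wdL(\la),-)$ to $\Ext^1_{\mc O}(\wdL(\la),-)$: the twisting functor $T_{s_\alpha}$ is only right exact and $\wdL(\la)$, though $\alpha$-free, is not $T_{s_\alpha}$-acyclic in the naive sense, so one must be careful to use the \emph{derived} equivalence $\mc LT_{s_\alpha}\dashv\mc RG_{s_\alpha}$ on $\mc D^b(\mc O)$ rather than any abelian-level adjunction, and to track that the vanishing $\mc L_1 T_{s_\alpha}\wdL(\la)=0$ (property (5), since $\wdL(\la)$ is $\alpha$-free) is exactly what makes $T_{s_\alpha}\wdL(\la)$ represent $\mc LT_{s_\alpha}\wdL(\la)$. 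A clean way to package this, which I would adopt, is to invoke \cite[Theorem 5.12, Corollary 5.14]{CoM1} directly: that theorem computes $\text{soc}$ and $\text{top}$ of $T_{s}\wdL(\la)$ in terms of $\Ext^1$ against $\alpha$-finite simples for any $\g$ with $\mc O$ admitting a simple-preserving duality. Since the present setup does not literally provide a simple-preserving duality (the duality $D$ swaps $\mf b$- and $\mf b^r$-highest weight modules), the task reduces to checking that the arguments of \cite{CoM1} go through verbatim once one keeps track of the $\mf b$/$\mf b^r$ bookkeeping via \eqref{eq::dualVerma} and property (4), and that Proposition \ref{prop::prop10} supplies the needed input that $T_{s_\alpha}\wdL(\la)$ has simple top $\wdL(\la)$. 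I would therefore present the proof as: (i) reduce the top statement to $\dim\Hom(\rad T_{s_\alpha}\wdL(\la),\wdL(\nu))=\dim\Ext^1_{\mc O}(T_{s_\alpha}\wdL(\la),\wdL(\nu))$; (ii) use $\mc LT_{s_\alpha}\dashv\mc RG_{s_\alpha}$ together with $\mc RG_{s_\alpha}\wdL(\nu)\cong\wdL(\nu)[-1]$ for $\alpha$-finite $\nu$ to rewrite this as $\Ext^1_{\mc O}(\wdL(\la),\wdL(\nu))$; (iii) apply $D$ and Lemma \ref{lem::3} to deduce the socle statement, noting that the sets of $\alpha$-finite and $\widehat\alpha$-finite simples correspond under $\nu\mapsto\widehat\nu$.
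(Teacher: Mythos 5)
The paper itself disposes of this proposition in one line (``mutatis mutandis the proof of \cite[Theorem 6.3]{AS}''), so any complete argument you give is more detailed than the source; but your write-up contains two concrete errors that you would need to repair. First, the identity $\dim\Hom_{\mc O}(\rad T_{s_\alpha}\wdL(\la),\wdL(\nu))=\dim\Ext^1_{\mc O}(T_{s_\alpha}\wdL(\la),\wdL(\nu))$ is false: the long exact sequence obtained by applying $\Hom_{\mc O}(-,\wdL(\nu))$ to $0\to U_\alpha(\la)\to T_{s_\alpha}\wdL(\la)\to\wdL(\la)\to 0$ places $\Hom_{\mc O}(U_\alpha(\la),\wdL(\nu))$ between $\Hom_{\mc O}(T_{s_\alpha}\wdL(\la),\wdL(\nu))=0$ and $\Ext^1_{\mc O}(\wdL(\la),\wdL(\nu))$, with cokernel mapping into $\Ext^1_{\mc O}(T_{s_\alpha}\wdL(\la),\wdL(\nu))$. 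The correct move is therefore to prove that this last group \emph{vanishes}, which your own adjunction computation essentially does: since $\mc LT_{s_\alpha}\wdL(\nu)\cong\wdL(\nu)[1]$ for $\alpha$-finite $\nu$, one gets $\mc RG_{s_\alpha}\wdL(\nu)\cong\wdL(\nu)[-1]$ and hence $\Ext^1_{\mc O}(T_{s_\alpha}\wdL(\la),\wdL(\nu))\cong\Hom_{\mc O}(\wdL(\la),\wdL(\nu))=0$. The moment you flagged as ``that cannot be right'' is in fact the right answer to the wrong question, and it is exactly what closes the long exact sequence and yields $\Hom_{\mc O}(U_\alpha(\la),\wdL(\nu))\cong\Ext^1_{\mc O}(\wdL(\la),\wdL(\nu))$. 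Second, your patch via $G_{s_\alpha}\wdL(\nu)\cong\wdL(\nu)$ for $\alpha$-finite $\nu$ is wrong: $G_{s_\alpha}\cong D\circ T_{s_{\widehat\alpha}}\circ D$ kills $\alpha$-finite simples (Lemma \ref{lem::3} converts $\alpha$-finiteness of $\wdL(\nu)$ into $\widehat\alpha$-finiteness of $D\wdL(\nu)$, which $T_{s_{\widehat\alpha}}$ annihilates), consistent with $\mc RG_{s_\alpha}\wdL(\nu)$ being concentrated in homological degree $1$, not $0$. The Grothendieck spectral sequence paragraph built on that claim should be deleted.

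A cleaner route to the top formula, and the one implicit in the paper's citation of \cite[Theorem 6.3]{AS}, is to use Proposition \ref{prop::prop10} directly: $U_\alpha(\la)$ is the largest $\alpha$-finite quotient of $\rad\widetilde P(\la)$, so its top is the sum of the $\alpha$-finite constituents of $\mathrm{top}(\rad\widetilde P(\la))\cong\bigoplus_\nu\wdL(\nu)^{\oplus\dim\Ext^1_{\mc O}(\wdL(\la),\wdL(\nu))}$, and every constituent of $U_\alpha(\la)$ is $\alpha$-finite. For the socle, your step (iii) as stated is not justified either: $D$ carries $T_{s_\alpha}\wdL(\la)$ to $G_{s_{\widehat\alpha}}\widetilde L_{\mf b^r}(\widehat\la)$, a completion of a $\mf b^r$-highest weight simple, not to $T_{s_{\widehat\alpha}}\wdL(\widehat\la)$, so there is no clean identification $\mathrm{soc}(U_\alpha(\la))\cong D(\mathrm{top}(U_{\widehat\alpha}(\widehat\la)))$. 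It is easier to argue directly: applying $\Hom_{\mc O}(\wdL(\nu),-)$ to the same short exact sequence gives $\Hom_{\mc O}(\wdL(\nu),U_\alpha(\la))\cong\Hom_{\mc O}(\wdL(\nu),T_{s_\alpha}\wdL(\la))$ for $\nu\neq\la$, and the full faithfulness of $\mc LT_{s_\alpha}$ (property (6)) together with $\mc LT_{s_\alpha}\wdL(\nu)\cong\wdL(\nu)[1]$ gives $\Hom_{\mc O}(\wdL(\nu),T_{s_\alpha}\wdL(\la))\cong\Hom_{\mc D^b(\mc O)}(\wdL(\nu)[-1],\wdL(\la))\cong\Ext^1_{\mc O}(\wdL(\nu),\wdL(\la))$, as required.
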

    \begin{proof}
    Mutatis mutandis the proof of \cite[Theorem 6.3]{AS}. 
    \end{proof}

  \begin{ex} \label{ex::exm11}
  	Consider $\g=\mf{pe}(2)$ and $\la=2\vare_2\in \h^\ast$. Set $\alpha:=\vare_1-\vare_2$.  We are going to show that the socle and radical of $U_\alpha(\la)$ are not isomorphic.  Recall that $\omega_2:= \vare_1+\vare_2$. 	By \cite[Lemma 5.11]{CC} it follows that  $\wdM(\la) =K(L(\la)) = \wdL(\la)$. We provide two methods to prove the conclusion.
  	
  	{\em Method 1}. 
  	We have $\ch T_{s_\alpha} \wdL(\la) = \ch \wdM(s_\alpha\cdot \la)= \ch \wdL(\la)+\ch \wdL(s_\alpha \cdot \la)+\wdL(s_\alpha\cdot \la -\omega_2)$ by \cite[Lemma 6.5]{CC}. It follows that 
  	\begin{align}
  	&\ch U_\alpha(\la) =\ch \wdL(s_\alpha \cdot \la) +\ch \wdL(s_\alpha\cdot \la -\omega_2)=\ch \wdL(\omega_2)+\ch \wdL(0).
  	\end{align}
  	We claim that the socle of $U_\alpha(\la)$ is $\wdL(0)$. To see this, we consider the following short exact sequence
  	\begin{align}
  	&0\rightarrow \wdL(\la) \rightarrow \wdM(\omega_2) \rightarrow K(L(\omega_2)) \rightarrow 0,  
  	\end{align} obtained by applying the Kac functor $K(-)$ to the short exact sequence $0\rightarrow L(\la)\rightarrow M(\omega_2) \rightarrow L(\omega_2)\rightarrow 0$. Since the socle of $K(L(\omega_2))$ is isomorphic to $\wdL(0)$ and $\wdM(\omega_2)$ has simple socle (see, e.g., \cite[Theorem 51]{CCM}) isomorphic to $\wdL(\la)$, we may conclude that $\Ext^1_{\mc O}(\wdL(0),\wdL(\la))\neq 0$. By Proposition \ref{prop::100}, it follows that $\wdL(0)$ is isomorphic to a submodule of $U_\alpha(\la)$. 
  	
  	Now, let 
  	\begin{align}
  	&0\rightarrow \wdL(\la) \rightarrow E\xrightarrow{f} \wdL(\omega_2)\rightarrow 0, \label{eq::exm11eq2}
  	\end{align} be a short exact sequence.  Since $\la$ can not be written as a sum of $\omega_2$ and positive roots, we may conclude that the preimage of the highest weight vector of $\wdL(\omega_2)$ under $f$ is again a highest weight vector of $E$, namely, $E$ is a quotient of $\wdM(\omega_2)$. By \cite[Lemma 6.1]{CC} and \cite[Lemma 5.11]{CC}, the socle of $\wdM(\omega_2)$ is $\wdL(\la)$, and we  have a non-split short exact sequence of the radical $\rad \wdM(\omega_2)$: 
  	\begin{align}
  	&0\rightarrow \wdL(\la) \rightarrow \rad\wdM(\omega_2)\rightarrow \wdL(0) \rightarrow 0, 
  	\end{align} which implies that \eqref{eq::exm11eq2} is  split. Namely, we have   $\Ext^1_{\mc O}(\wdL(\omega_2), \wdL(\la))= 0$. Consequently, the socle and top of  $U_\alpha(\la)$ are isomorphic to $\wdL(0)$ and  $\wdL(\omega_2)$, respectively.  
  	
  	{\em Method 2}. By \cite[Theorem 2.3]{AS} we have $T^\oa_{s_\alpha}(L(\la)) = M(s_\alpha\cdot\la)^\vee$, where $M(s_\alpha \cdot \la)^\vee$ denotes the dual Verma module in the sense of \cite[Section 3.3]{Hu08}. We claim that  \begin{align}&T_{s_\alpha} \wdL(\la)=T_{s_\alpha} K(L(\la))\cong K(M(s_\alpha\cdot\la)^\vee).\label{eq::37} 
  		\end{align} To see this, we note that $T^\oa_{s_\alpha}L(\la)= T^\oa_{s_\alpha}M(\la)\cong M(s_\alpha\cdot\la)^\vee$, which has a unique $\g_{\geq 0}$-module structure, and so $T_{s_\alpha} K(M(\la))=T_{s_\alpha} \Ind_{\g_{\geq 0}}^{\g}M(\la)\cong \Ind_{\g_{\geq 0}}^{\g}T_{s_\alpha}^\oa M(\la)\cong \Ind_{\g_{\geq 0}}^{\g}  M(s_\alpha\cdot \la)^\vee$, as desired.  
  	This implies that there is a short exact sequence 
  	$$0\rightarrow K(L(\omega_2)) \rightarrow T_{s_\alpha} \wdL(\la)\rightarrow \wdL(\la) \rightarrow 0.$$
  	Consequently, $U_\alpha(\la)$ is isomorphic to $K(L(\omega_2))$ with  socle $\wdL(0)$ and top $\wdL(\omega_2)$.
  \end{ex} 
  
  The Jantzen middle  $U_\alpha(\la)$ in Example \ref{ex::exm11} is non-semisimple. We will generalize this result and show the existence of non-semisimple Jantzen middles for arbitrary $\pn$ using an argument similar to Method 1 of Example \ref{ex::exm11}; see Proposition \ref{Thm::nonssU}. 

  \subsubsection{Some technical tools} \label{sect::322} 
  
  For basic classical Lie superalgebras, it has been proved in \cite[Theorem 6.4]{Co16} that the Jantzen middles (associated with certain roots) of a block of $\mc O$ are semisimple if it admits a Kazhdan-Lusztig theory in the sense of \cite[Definition 3.3]{CPS1}. The main goal of this section is to prove an analogue for   Lie superalgebras in our setting, including $\pn$.  We then use this result to complete the proof of Theorem B  introduced in Section \ref{sec1}.

 The following lemma is an analogue of    \cite[Corollary 5.7]{Co16} for any   Lie superalgebra of type I-0.
  \begin{lem} \label{lem::14}
  	Let $\alpha \in \Pi_0$. Then for any   $\alpha$-free modules $M, V\in \mc O$ and $\widehat \alpha$-free module $N\in \mc O$, we have 
  	\begin{align}
  	&\Hom_{\mc O}(T_{s_\alpha} M, DN) \cong \Hom_{\mc O}(T_{s_{\widehat \alpha}}N, DM).\label{eq::lem141} \\
  	&\Ext^k_{\mc O}(T_{s_\alpha} M, T_{s_\alpha} V) \cong \Ext^k_{\mc O}(M, V),~\text{ for any }k\geq 0. \label{eq::lem142}	
  	\end{align} 
  \end{lem}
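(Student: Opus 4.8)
The two formulas should follow by standard adjunction–and–derived-equivalence arguments, transported from the classical / basic-classical setting via the structural properties of the twisting functors collected in the list (1)--(6) of Section \ref{sect::Jmiddle} (especially the derived auto-equivalence $\mc LT_{s_\alpha}$ with inverse $\mc RG_{s_\alpha}$, the relation $D\circ G_{s_{\widehat\alpha}}\circ D\cong T_{s_\alpha}$ on $\mc O^\Z$, and the computation of $\mc L_1 T_{s_\alpha}$ as the maximal $\alpha$-finite submodule). I will treat \eqref{eq::lem142} first and then \eqref{eq::lem141}, since the former is the cleaner derived-categorical statement and the latter is an adjunction computation that needs the derived vanishing as input.

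\medskip
\noindent\emph{Step 1: the $\Ext$-isomorphism \eqref{eq::lem142}.} The idea is that $\mc LT_{s_\alpha}\colon \mc D^b(\mc O)\to\mc D^b(\mc O)$ is an equivalence (property (6)), hence induces isomorphisms $\Hom_{\mc D^b(\mc O)}(X,Y[k])\cong\Hom_{\mc D^b(\mc O)}(\mc LT_{s_\alpha}X,\mc LT_{s_\alpha}Y[k])$ for all $X,Y\in\mc D^b(\mc O)$ and all $k$. Taking $X=M$, $Y=V$ with $M,V$ $\alpha$-free, the key point is that $\mc L_1 T_{s_\alpha}(M)=0$ and $\mc L_1 T_{s_\alpha}(V)=0$ because, by property (5), $\mc L_1 T_{s_\alpha}$ of a module is its maximal $\alpha$-finite submodule, which vanishes precisely when the module is $\alpha$-free (the action of $f_\alpha$ is injective, so there is no nonzero $\alpha$-finite submodule). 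Since also $\mc L_i T_{s_\alpha}=0$ for $i>1$, we get $\mc LT_{s_\alpha}M\cong T_{s_\alpha}M$ and $\mc LT_{s_\alpha}V\cong T_{s_\alpha}V$ as objects of the derived category, concentrated in degree $0$. Therefore $\Ext^k_{\mc O}(M,V)=\Hom_{\mc D^b(\mc O)}(M,V[k])\cong\Hom_{\mc D^b(\mc O)}(T_{s_\alpha}M,T_{s_\alpha}V[k])=\Ext^k_{\mc O}(T_{s_\alpha}M,T_{s_\alpha}V)$, using that $\mc O$ has enough of a derived category for $\Hom$ in $\mc D^b$ to compute $\Ext$ groups (this is standard for $\mc O$; one can invoke \cite{CC} or \cite{CoM1} here as in the analogue \cite[Corollary 5.7]{Co16}).

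\medskip
\noindent\emph{Step 2: the adjunction isomorphism \eqref{eq::lem141}.} Here the plan is: $T_{s_\alpha}$ is left adjoint to $G_{s_\alpha}$, so $\Hom_{\mc O}(T_{s_\alpha}M, DN)\cong\Hom_{\mc O}(M, G_{s_\alpha}DN)$. Now use property (4), $D\circ G_{s_{\widehat\alpha}}\circ D\cong T_{s_\alpha}$ on $\mc O^\Z$, rewritten as $G_{s_{\widehat\alpha}}\cong D\circ T_{s_\alpha}\circ D$ (applying $D$ on both sides and using $D^2\cong\mathrm{id}$); hence the right adjoint of $T_{s_{\widehat\alpha}}$, namely $G_{s_{\widehat\alpha}}$, satisfies $G_{s_\alpha}DN\cong D\,T_{s_{\widehat\alpha}}\,D\,D N=D\,T_{s_{\widehat\alpha}}N$ — wait, one must be careful with which reflection is which; the correct bookkeeping, matching \cite[Corollary 5.7]{Co16}, is $G_{s_\alpha}\cong D\circ T_{s_{\widehat\alpha}}\circ D$, so $G_{s_\alpha}DN\cong D\,T_{s_{\widehat\alpha}}\,D(DN)\cong D\,T_{s_{\widehat\alpha}}N$. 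Then $\Hom_{\mc O}(M, G_{s_\alpha}DN)\cong\Hom_{\mc O}(M, D\,T_{s_{\widehat\alpha}}N)\cong\Hom_{\mc O}(T_{s_{\widehat\alpha}}N, DM)$, where the last step is the standard fact that $D$ is a contravariant duality on $\mc O$ (so $\Hom(X,DY)\cong\Hom(Y,DX)$). The $\alpha$-freeness of $M,V$ and the $\widehat\alpha$-freeness of $N$ guarantee that $T_{s_\alpha}M$ and $T_{s_{\widehat\alpha}}N$ are the honest (non-derived) twists and, via Proposition \ref{prop::prop10}, are nonzero with the expected tops, so the statement is not vacuous; but strictly these hypotheses are used to make the objects behave, and the $\Hom$-identity itself is formal once the adjunction and (4) are in place.

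\medskip
\noindent\emph{Main obstacle.} The delicate point is the precise interaction of the duality $D$ with the twisting functors when passing between the Borel $\mf b$ and the reverse Borel $\mf b^r$: property (4) is stated on $\mc O^\Z$ and involves the hatted reflection $s_{\widehat\alpha}$, so I must verify that $\widehat\alpha\in\Pi_0$ whenever $\alpha\in\Pi_0$ (true since $\widehat{(\cdot)}=-w_0$ permutes $\Pi_0$) and that the adjunction $(T_{s_\alpha},G_{s_\alpha})$ together with $G_s$ being Joseph's Enright completion functor (property (2)) is compatible with the identification in (4) — i.e. that $G_{s_\alpha}$ really is the right adjoint appearing after rewriting (4), not merely abstractly isomorphic to something. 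This is exactly the bookkeeping that \cite[Corollary 5.7]{Co16} carries out in the basic classical case, and the claim is that assumptions \eqref{ass::A1}--\eqref{pre::tridec} (type I-0) are enough to run the same argument verbatim; I expect the proof to consist of citing \cite[Corollary 5.7]{Co16} and \cite[Sections 4.2--4.3]{CC} and remarking "mutatis mutandis", with the only genuine check being that all the cited properties (1)--(6) hold in the type I-0 generality — which they do, by the references already assembled at the start of Section \ref{sect::Jmiddle}.
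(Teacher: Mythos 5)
Your proposal is correct and follows essentially the same route as the paper: \eqref{eq::lem142} via the derived auto-equivalence $\mc LT_{s_\alpha}$ together with the vanishing of $\mc L_1T_{s_\alpha}$ on $\alpha$-free modules, and \eqref{eq::lem141} via the adjunction $(T_{s_\alpha},G_{s_\alpha})$, the identity $D\circ G_{s_{\widehat\alpha}}\circ D\cong T_{s_\alpha}$, and the duality $\Hom(X,DY)\cong\Hom(Y,DX)$. The only (harmless) difference is that you run the $\Hom$ computation at the abelian level, whereas the paper performs the same adjunction in $\mc D^b(\mc O)$ and uses the freeness hypotheses to identify $\mc LT_{s_\alpha}M$ with $T_{s_\alpha}M$ and $\mc LT_{s_{\widehat\alpha}}N$ with $T_{s_{\widehat\alpha}}N$.
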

  \begin{proof}  We shall adapt the proof of \cite[Corollary 5.7]{Co16} to establish \eqref{eq::lem141}. We  compute   
  	\begin{align*}
  	&\Hom_{\mc O}(T_{s_\alpha} M, DN) \cong \Hom_{D^b(\mc O)}(\mc LT_{s_\alpha} M, DN)\\
  	&\cong \Hom_{D^b(\mc O)}(M, \mc RG_{s_\alpha} DN)\cong \Hom_{D^b(\mc O)}(M, D\mc LT_{s_{\widehat \alpha}}N) \\ &\cong \Hom_{\mc O}(M, DT_{s_{\widehat \alpha}}N) \cong  \Hom_{\mc O}(T_{s_{\widehat \alpha}}N, DM).
  	\end{align*}  
  	For any $k\in \Z$, let $(-)[k]$ be the corresponding shift functor on $\mc D^b(\mc O)$. To obtain \eqref{eq::lem142}, we use the argument as in proof of \cite[Corollary 5.7 (2)]{Co16} and  compute 
  	\begin{align*}
  	&\Ext^k_{\mc O}(T_{s_\alpha} M, T_{s_\alpha} V) \cong  \Hom_{D^b(\mc O)}(\mc LT_{s_\alpha} M, \mc LT_{s_\alpha} V[k]) \\
  	&\cong \Hom_{D^b(\mc O)}(M, V[k]) \cong \Ext^k_{\mc O}(M, V).
  	\end{align*}
  	The conclusion follows. 
  \end{proof}
  
  The following non-vanishing property of Ext-group will be helpful. 
  \begin{lem} \label{lem::lem14}
  	Suppose that $\Ext^1_{\mc O_\oa}(M(\la), L(\mu))\neq 0$,  for some $\la ,\mu \in \h^\ast$. Then we have $\Ext^1_{\mc O}(\wdM(\la), \wdL(\mu))\neq 0$.
  \end{lem}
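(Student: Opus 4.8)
The plan is to transport a non-split extension from $\mc O_\oa$ to $\mc O$ through the exact Kac functor $K$, and then to project onto the simple top of the relevant Kac module.

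First I would unwind the hypothesis. Fix a non-split short exact sequence $0\to L(\mu)\to E\xrightarrow{\pi}M(\la)\to 0$ in $\mc O_\oa$. A standard highest-weight-vector argument (if $\la\not<\mu$, then a lift of the canonical generator of $M(\la)$ is itself a highest weight vector and generates a copy of $M(\la)$ splitting the sequence) shows $\la<\mu$, so that $\mu-\la$ is a non-negative integral combination of positive roots of $\g_\oa$, and $\operatorname{top}E\cong L(\la)$; in particular $E$ is cyclic, generated by any lift $w$ of the generator of $M(\la)$, and $\mf n_\oa^+ w\neq 0$. Applying the exact functor $K$ and using $\wdM(\la)=K(M(\la))$ gives a short exact sequence in $\mc O$,
\[
0\longrightarrow K(L(\mu))\longrightarrow K(E)\xrightarrow{\ K(\pi)\ }\wdM(\la)\longrightarrow 0 .
\]

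Next I would show this $K$-sequence is non-split, which yields $\Ext^1_{\mc O}(\wdM(\la),K(L(\mu)))\neq 0$ with witness the class of $K(E)$. The point is that $M(\la)$, embedded as $\Lambda^0\g_{-1}\otimes M(\la)$, is a $\g_\oa$-direct summand of $\Res\wdM(\la)\cong\Lambda(\g_{-1})\otimes M(\la)$ (the grading on $\Lambda(\g_{-1})$ is $\g_\oa$-stable), and $\Res K(\pi)=\operatorname{id}_{\Lambda(\g_{-1})}\otimes\pi$ respects this grading and equals $\pi$ on the degree-zero parts. Hence a $\g$-splitting of the $K$-sequence would, after applying $\Res$, restricting to the degree-zero summand and projecting back onto degree zero, produce a $\g_\oa$-splitting of the original sequence — a contradiction.

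Finally I would descend to the simple top $\wdL(\mu)$ of $K(L(\mu))$. Writing $p\colon K(L(\mu))\twoheadrightarrow\wdL(\mu)$ and applying $\Hom_{\mc O}(\wdM(\la),-)$ to $0\to\rad K(L(\mu))\to K(L(\mu))\xrightarrow{p}\wdL(\mu)\to 0$, together with $\Hom_{\mc O}(\wdM(\la),\wdL(\mu))=0$ (valid since $\la<\mu$), reduces the claim to showing that $p_*$ sends the class of $K(E)$ to a non-zero element of $\Ext^1_{\mc O}(\wdM(\la),\wdL(\mu))$; equivalently, that the pushed-out sequence $0\to\wdL(\mu)\to K(E)/\rad K(L(\mu))\to\wdM(\la)\to 0$ is non-split, equivalently (as $\wdM(\la)$ has simple top $\wdL(\la)\not\cong\wdL(\mu)$) that $\wdL(\mu)$ does not occur in $\operatorname{top}K(E)$, i.e.\ $\Hom_{\mc O}(K(E),\wdL(\mu))=0$. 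I expect this last point to be the main obstacle. Via the adjunction $K\dashv(-)^{\g_1}\circ\Res$ it becomes the $\g_\oa$-statement $\Hom_{\mc O_\oa}\big(E,(\wdL(\mu))^{\g_1}\big)=0$; a non-zero map here cannot vanish on $L(\mu)\subseteq E$ — otherwise it factors through $M(\la)$ and carries the generator to a weight-$\la$ vector of $\wdL(\mu)$ killed by $\mf n_\oa^+$ and $\g_1$, hence a highest weight vector of $\wdL(\mu)$ of weight $\la\neq\mu$, which is impossible — so it restricts to an embedding of $L(\mu)$ onto the $\g_\oa$-submodule $U(\g_\oa)v_\mu$ of $\wdL(\mu)$ generated by its highest weight vector $v_\mu$. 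One then rules this out using the structure of $\wdL(\mu)$ near its highest weight (via its realization as a quotient of the Kac module $\wdM(\mu)$), since such a map would force a quotient of $E$ with simple top $L(\la)$, containing a copy of $U(\g_\oa)v_\mu$, to embed into $\Res\wdL(\mu)$. Granting this, the long exact sequence above yields $\Ext^1_{\mc O}(\wdM(\la),\wdL(\mu))\neq 0$.
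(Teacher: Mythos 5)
Your argument tracks the paper's proof step for step until the very end: you take a non-split extension $0\to L(\mu)\to E\to M(\la)\to 0$, observe that $E$ has simple top $L(\la)$, apply the exact Kac functor, and reduce everything to the non-splitness of $0\to\wdL(\mu)\to K(E)/\rad K(L(\mu))\to\wdM(\la)\to 0$, for which it suffices (since $\mu\neq\la$) that $\wdL(\mu)$ does not occur in the top of $K(E)$. The genuine gap is that you never prove this last point. Your adjunction reduction to $\Hom_{\g_\oa}(E,\wdL(\mu)^{\g_1})=0$ is fine, and so is the observation that a nonzero such map must restrict to an embedding of $L(\mu)$ onto $U(\g_\oa)v_\mu$, where $v_\mu$ is the highest weight vector of $\wdL(\mu)$; but your concluding sentence (``one then rules this out using the structure of $\wdL(\mu)$ near its highest weight \dots\ granting this'') is exactly the part that needs a proof. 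For $\pn$, and for general type I-0 superalgebras, the $\g_{\geq 0}$-module structure of $\wdL(\mu)$ --- in particular its space of $\g_1$-invariants --- is not explicitly known, and nothing in your sketch excludes a weight-$\la$ vector $u\in\wdL(\mu)^{\g_1}$ with $0\neq\mf n_\oa^+u\subseteq U(\g_\oa)v_\mu$ and with $U(\g_\oa)u$ realizing the forbidden image of $E$. So as written the proof is incomplete at its crucial step.

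The paper closes precisely this gap by citing \cite[Theorem 51]{CCM}: the Kac functor preserves the (length of the) top, so $K(E)$ has simple top $\wdL(\la)$ because $E$ has simple top $L(\la)$ --- a fact you already established --- and hence $\Hom_{\mc O}(K(E),\wdL(\mu))=0$ since $\la\neq\mu$. Substituting that citation for your final paragraph completes your argument. (Your intermediate verification that $0\to K(L(\mu))\to K(E)\to\wdM(\la)\to 0$ is itself non-split, via the degree-zero $\g_\oa$-summand of $\Res K(E)$, is correct but becomes superfluous once the top of $K(E)$ is known to be simple.)
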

  \begin{proof}
  	Let 
  	\begin{align}
  	&0\rightarrow L(\mu)\rightarrow E\rightarrow M(\la)\rightarrow 0\label{eq::lem143}
  	\end{align}be a non-split short exact sequence in $\mc O_\oa$. We note that every maximal submodule of $E$ contains $L(\mu)$ (otherwise \eqref{eq::lem143} is split). Since $M(\la)$ has a unique maximal submodule, we may conclude $E$ has a simple top. Applying the Kac functor $K(-)$, we obtain a short exact sequence in $\mc O$
  	\begin{align}
  	&0\rightarrow K(L(\mu))\rightarrow K(E)\rightarrow \wdM(\la)\rightarrow 0.\label{eq::lem144}
  	\end{align} By \cite[Theorem 51]{CCM}, the module $K(E)$ has  a simple top. Now, consider the short exact sequence
  	\begin{align}
  	&0\rightarrow \wdL(\mu)\rightarrow K(E)/\rad K(L(\mu))\rightarrow \wdM(\la)\rightarrow 0.\label{eq::lem145}
  	\end{align} Since $K(E)$ has a simple top, we may conclude that \eqref{eq::lem145} is non-split. The conclusion follows.  
  \end{proof}

  \subsubsection{Abstract Kazhdan-Lusztig theory} \label{sect::KLtheory} We recall the definition of an  abstract Kazhdan-Lusztig theory formulated by Cline, Parshall and Scott in \cite[Definition 3.3]{CPS93}. Let $\mc C$ be a highest weight category with weight poset $\Lambda$, simple objects $S(\la)$, induced objects $A(\la)$, Weyl objects $V(\la)$ and a length function $\ell:\Lambda \rightarrow \Z$ in the sense of \cite[Section 1]{CPS93} (see also \cite[Definition 2.1]{CPS932}). Then $\mc C$ is said to have  an {\em abstract Kazhdan-Lusztig theory} relative to $\ell$ provided that 
  \begin{align}
  &\Ext^n_{\mc C}(S(\la), A(\nu)) \neq 0 \Rightarrow \ell(\la) -\ell(\nu) \equiv n \text{ (mod $2$)},\\
  &\Ext^n_{\mc C}(V(\nu), S(\la)) \neq 0 \Rightarrow \ell(\la) -\ell(\nu) \equiv n \text{ (mod $2$)},
  \end{align} for any $n$ and $\la,\nu\in \Lambda$; see \cite{CPS93, CPS932, Scott1, Par} for  the background, examples and discussions. 
  
  For the category $\mc O^\Z$ of the general linear Lie superalgebra $\g=\gl(m|n)$,  it is proved in \cite[Corollary 3.3]{CSTAMS} that the Brundan-Kazhdan-Lusztig theory formulated in \cite{Br1} and established in \cite{CLW2, BLW} is an abstract  Kazhdan-Lusztig theory. For a weight $\eta \in \h^\ast$, we  denote by $\mc O_\eta$ the  indecomposable block of $\mc O$ that contains $\wdL(\eta)$. The following theorem is a restatement of  Theorem A, which is an analogue of \cite[Theorem 6.4]{Co16} for Lie superalgebras of type I-0. 

  \begin{thm} \label{thm::nonKL}
 Let $\eta$ be integral. Suppose that $\mc O_\eta$ contains a non-semisimple Jantzen middle. Then $\mc O_\eta$ does not admit a Kazhdan-Lusztig theory in the sense of \cite[Definition 3.3]{CPS93}.
  \end{thm}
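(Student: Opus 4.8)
The plan is to prove the contrapositive: I will show that if $\mc O_\eta$ admits a Kazhdan--Lusztig theory relative to a length function $\ell$, then every Jantzen middle occurring in $\mc O_\eta$ is semisimple. By the definition in Section~\ref{Sect::Jimddlesub1} such a Jantzen middle is $U_\alpha(\la)=\rad T_{s_\alpha}\wdL(\la)$ for some $\alpha\in\Pi_0$ and $\la\in\mc P$ with $\wdL(\la)\in\mc O_\eta$ and $\wdL(\la)$ being $\alpha$-free, so I fix such data and must prove $U_\alpha(\la)$ is semisimple. The first, soft, ingredient is the standard homological consequence of a Kazhdan--Lusztig theory: it forces $\Ext^n_{\mc O}(\wdL(\mu),\wdL(\nu))\neq 0\Rightarrow \ell(\mu)-\ell(\nu)\equiv n\pmod 2$ for all $\mu,\nu$ with $\wdL(\mu),\wdL(\nu)\in\mc O_\eta$; this is deduced from the two defining $\Ext$-parity conditions on standard/costandard versus simple objects by means of the highest weight structure (cf. \cite[\S3]{CPS93}, \cite{Scott1}, and \cite[\S5]{Co16}).

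The technical core is a parity statement: \emph{every composition factor $\wdL(\nu)$ of $U_\alpha(\la)$ is $\alpha$-finite and satisfies $\ell(\nu)\equiv \ell(\la)+1\pmod 2$.} The $\alpha$-finiteness is immediate from Proposition~\ref{prop::prop10} (which realises $U_\alpha(\la)$ as the largest $\alpha$-finite quotient of $\rad\widetilde P(\la)$) together with the observation that $\alpha$-finiteness passes to subquotients. For the parity I would pass to a graded lift: a block admitting a Kazhdan--Lusztig theory carries the associated grading (\cite{CPS932,Scott1}) in which $\widetilde P(\la)$ has its radical filtration equal to its grading filtration and $\wdL(\la)$ is pure of degree $0$, and the twisting functor $T_{s_\alpha}$ admits a graded lift $\widetilde T_{s_\alpha}$ as in \cite[\S3.6]{CMW}, \cite[\S4.3]{CC}. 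Every Verma module $\wdM(\mu)$ is $\alpha$-free (the even root vector $f_\alpha$ acts injectively on the free $U(\mf n^-)$-module $\wdM(\mu)$, and a nonzero module in $\mc O$ cannot be simultaneously $\alpha$-free and $\alpha$-finite), so $\mc L_1T_{s_\alpha}\wdM(\mu)=0$ and $\widetilde T_{s_\alpha}$ is exact on Verma modules, acting on their classes in the graded Grothendieck group by the substitution $[\wdM(\mu)]\mapsto[\wdM(s_\alpha\cdot\mu)]$ up to a grading shift. Expanding $[\wdL(\la)]$ in graded Verma classes and applying $\widetilde T_{s_\alpha}$, a character computation using the degree bounds for Kazhdan--Lusztig polynomials shows that $\widetilde T_{s_\alpha}\wdL(\la)^{\Z}$ is concentrated in two adjacent internal degrees, say $0$ and $1$: its degree-$0$ part is the simple top $\wdL(\la)$ (Proposition~\ref{prop::prop10}) and its degree-$1$ part is $U_\alpha(\la)$. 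Since a graded module concentrated in a single degree over a positively graded algebra with semisimple degree-zero part is semisimple, $U_\alpha(\la)$ is semisimple, which also proves the parity statement. (If one wishes to avoid graded lifts, one may instead prove the parity statement directly: Proposition~\ref{prop::100} controls the socle and top of $U_\alpha(\la)$, the first paragraph then handles those layers, and Lemma~\ref{lem::lem14}, which transports non-vanishing first $\Ext$-groups from the reductive category $\mc O_\oa$ — where the Kazhdan--Lusztig conjecture is known — into $\mc O$, serves to pin down the parity of $\ell$ and propagate it through the remaining radical layers of $U_\alpha(\la)$.)

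With the parity statement in hand the theorem follows formally. If $U_\alpha(\la)$ were not semisimple it would contain a length-two subquotient that is a non-split extension $0\to\wdL(\nu')\to E\to\wdL(\nu)\to 0$ with $\wdL(\nu),\wdL(\nu')$ composition factors of $U_\alpha(\la)$; hence $\Ext^1_{\mc O}(\wdL(\nu),\wdL(\nu'))\neq 0$ and so $\ell(\nu)-\ell(\nu')$ is odd, contradicting $\ell(\nu)\equiv\ell(\nu')\equiv\ell(\la)+1\pmod 2$. The main obstacle is the parity statement — equivalently, the two-degree concentration of the graded lift of $T_{s_\alpha}\wdL(\la)$ — which is the exact analogue of the phenomenon exploited in \cite[\S7]{AS} and \cite[Theorem~6.4]{Co16}; its delicate points are that the graded lift of the twisting functor behaves as expected in the type I-0 setting (to be obtained by combining \cite{CC} with \cite{CMW}) and that the requisite degree/parity bounds on Kazhdan--Lusztig-type polynomials hold in this generality, which is precisely what the existence of a Kazhdan--Lusztig theory guarantees.
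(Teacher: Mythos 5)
Your overall strategy (prove the contrapositive; use the Ext-parity consequence of a Kazhdan--Lusztig theory to constrain the constituents of $U_\alpha(\la)$) matches the paper's, and your observation that Proposition~\ref{prop::100} together with the parity of $\Ext^1$ between simples pins down the parity of the top and socle of $U_\alpha(\la)$ is sound. The gap is in the step that converts this parity information into semisimplicity. You propose to show that \emph{every} composition factor of $U_\alpha(\la)$ has length $\equiv\ell(\la)+1$ by passing to a graded lift in which $T_{s_\alpha}\wdL(\la)$ is concentrated in two adjacent degrees. None of the ingredients of that argument are available here: an abstract Kazhdan--Lusztig theory in the sense of \cite[Definition 3.3]{CPS93} does not come with a Koszul-type grading on the block, a graded lift of $T_{s_\alpha}$, or ``Kazhdan--Lusztig polynomials with degree bounds'' --- for $\pn$ no such polynomials are known, and for reductive $\mf g$ the two-degree concentration of the graded twisted simple is a theorem of Andersen--Stroppel that is \emph{equivalent} to the Kazhdan--Lusztig conjecture. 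So your core step assumes essentially what is to be proved. Your fallback (``propagate the parity through the remaining radical layers'' via Lemma~\ref{lem::lem14}) gives no mechanism for controlling the middle Loewy layers; parity of top and socle alone does not rule out, e.g., a uniserial module with layers $\wdL(\nu),\wdL(\nu'),\wdL(\nu)$.

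What the paper does instead is establish the embeddings $\Hom_{\mc O}(U_\alpha(\la),\nbob(\mu))\hookrightarrow\Ext^1_{\mc O}(\wdL(\la),\nbob(\mu))$ and $\Hom_{\mc O}(\wdM(\mu),U_\alpha(\la))\hookrightarrow\Ext^1_{\mc O}(\wdM(\mu),\wdL(\la))$ (equations \eqref{eq::nonKL2}--\eqref{eq::nonKL3}, which require the adjunction/duality properties of Lemma~\ref{lem::14}, the compatibility of $T_{s_\alpha}$ with Kac induction, and a separate treatment of the case $\la=s_\alpha\cdot\mu$ via Lemma~\ref{lem::lem14} and \cite[Proposition 6.17]{CoM2}), deduces from the length function that all weights $\mu$ with nonzero such Homs share one parity, and then invokes the semisimplicity criterion \cite[Theorem 4.1]{CPS93}. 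That CPS theorem is precisely the engine that handles the middle layers, and it is the piece missing from your argument; note also that its hypotheses are phrased in terms of Homs from standard and to costandard objects, which is why the paper works with \eqref{eq::nonKL2}--\eqref{eq::nonKL3} rather than with the socle and top as in Proposition~\ref{prop::100}. To repair your proof, replace the graded-lift paragraph by these Hom-space estimates and conclude via \cite[Theorem 4.1]{CPS93}.
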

  \begin{proof}
  	
  	We shall adapt the proof of \cite[Theorem 6.4]{Co16}  to  obtain the conclusion for Lie superalgebra $\g$ of type I-0  by establishing all essential ingredients.  	To see this, let  $\alpha\in \Pi_0$ and $\la, \mu, \gamma \in \mc P$ with $\widehat \la \neq s_{\widehat \alpha}\cdot \widehat\mu$. We claim that if $\wdL(\la)$, $\wdL(\gamma)$ are $\alpha$-free and $\wdL(\mu)$ is $\alpha$-finite then we have  
  	\begin{align}
  	&\Hom_{\mc O}(U_\alpha(\la), \nbob(\gamma))  = \Hom_{\mc O}(\wdM(\gamma),U_\alpha(\la)) =0, \label{eq::nonKL1}
  	\end{align} and inclusions
  	\begin{align}
  	&\Hom_{\mc O}(U_\alpha(\la), \nbob(\mu))  \hookrightarrow  \Ext^1_{\mc O}(\wdL(\la), \nbob(\mu)),\label{eq::nonKL2} \\
  	&\Hom_{\mc O}(\wdM(\mu), U_\alpha(\la))  \hookrightarrow  \Ext^1_{\mc O}(\wdM(\mu), \wdL(\la)).\label{eq::nonKL3}
  	\end{align}
  	The equality \eqref{eq::nonKL1} is an immediate consequence of Proposition \ref{prop::prop10}. Now we are going to show \eqref{eq::nonKL2}. To see this, we apply the functor $\Hom_{\mc O}(-,\nbob(\mu))$ to the short exact sequence 
  	\begin{align}
  	&0\rightarrow U_\alpha(\la) \rightarrow  T_{s_\alpha}\wdL(\la) \rightarrow \wdL(\la)\rightarrow  0,
  	\end{align} and obtain a long exact sequence which contains 
  	\begin{align}
  	& \Hom_{\mc O}(T_{s_\alpha} \wdL(\la), \nbob(\mu)) \rightarrow \Hom_{\mc O}(U_\alpha(\la), \nbob(\mu)) \rightarrow \Ext^1_{\mc O}(\wdL(\la), \nbob(\mu)).
  	\end{align}

  	We recall the following isomorphisms  
  	$$\nbob(\mu)\cong D\wdM_{\mf b^r}(\widehat \mu),~ \widetilde L_{\mf b^r}(\widehat \la)\cong D\wdL(\la)$$ from \eqref{eq::dualVerma}. Also, we note that $\widehat \mu$ is $\widehat \alpha$-finite and so $T_{s_{\widehat \alpha}}^\oa M(\widehat \mu)\cong M(s_{\widehat \alpha}\cdot \widehat \mu)$, which implies that $T_{s_{\widehat \alpha}} \wdM_{\mf b^r}(\widehat \mu)\cong \wdM_{\mf b^r}(s_{\widehat{\alpha}}\cdot \widehat \mu)$ for the same reason as that given for \eqref{eq::37}. Therefore by Lemma \ref{lem::14} we have   
  	\begin{align}
  	&\Hom_{\mc O}(T_{s_\alpha} \wdL(\la), \nbob(\mu)) \cong \Hom_{\mc O}(\wdM_{\mf b^r}(s_{\widehat{\alpha}}\cdot \widehat \mu), \widetilde L_{\mf b^r}(\widehat \la))=0.
  	\end{align} This proves \eqref{eq::nonKL2}.

     Next, we proceed with the proof of \eqref{eq::nonKL3}.
     By \cite[Lemma 6.2]{AL} and \cite[Lemma 5.7]{CoM1}, we have a four term exact sequence 
     \begin{align}
     &0\rightarrow M(s_\alpha\cdot \mu) \rightarrow M(\mu) \rightarrow T_{s_\alpha}^\oa M(s_\alpha\cdot \mu) \rightarrow M(s_\alpha\cdot \mu)\rightarrow 0. 
     \end{align}
Applying the Kac functor $K(-)$, we obtain a four term exact sequence of $\g$-modules
\begin{align}
&0\rightarrow \wdM(s_\alpha\cdot \mu) \rightarrow \wdM(\mu) \rightarrow K(T_{s_\alpha}^\oa M(s_\alpha\cdot \mu)) \rightarrow \wdM(s_\alpha\cdot \mu)\rightarrow 0.  \label{eq::322}
\end{align} We claim that $K(T_{s_\alpha}^\oa M(s_\alpha\cdot \mu)) \cong T_{s_\alpha}K(M(s_\alpha\cdot \mu))$. To see this, let $M$ denote the $\g_{\geq 0}$-module which is $M(s_\alpha\cdot \mu)$ with trivial $\g_1$-action. With slightly abusing notations, we again denote by $T_{s_\alpha}$ the twisting functor for $\g_{\geq 0}$. Since $$\Res_{\g}^{\g_{\geq 0}}T_{s_\alpha}M\cong T_{s_\alpha}^\oa M(s_\alpha\cdot \mu)$$ is a quotient of $P(s_\alpha\cdot \mu)$, it follows that $\Res_{\g}^{\g_{\geq 0}}T_{s_\alpha}M$ is  indecomposable. Therefore, we have either $(T_{s_\alpha}M)_\oa=0$ or $(T_{s_\alpha}M)_\ob=0$, and so the $\g_1$-action on $T_{s_\alpha}M$ is trivial. Consequently, we have $K(T_{s_\alpha}^\oa M(s_\alpha\cdot \mu)) \cong \Ind_{\g_{\geq 0}}^\g T_{s_\alpha}M \cong T_{s_\alpha}K(M(s_\alpha\cdot \mu))$.  	Using \eqref{eq::322}, the equality  \eqref{eq::nonKL3}  follows from an argument identical to the proof of \cite[Corollary 6.5 (2)]{Co16}. 
  	
  	
  	Suppose on the contrary that $\mc O_\eta$ 
  	admits a Kazhdan-Lusztig theory relative to a length function $\ell$ from the set of highest weights of simple modules of $\mc O_\eta$ to $\Z$. We claim that,  for given $\wdL(\mu_1),\wdL(\mu_2),U_\alpha(\la)\in \mc O_\eta$, if  
  	\begin{align}
  	&\Hom_{\mc O}(U_\alpha(\la), \nbob(\mu_1)) \neq 0, ~\Hom_{\mc O}(U_\alpha(\la), \nbob(\mu_2)) \neq 0,   \label{eq::eq423}
  	\end{align} then we have 
  	\begin{align}
  	&\Ext^1_{\mc O}(\wdM(\mu_1),\wdL(\mu_2))=0. \label{eq::eq424}
  	\end{align} To see this, we first assume that  $\widehat \la \neq s_{\widehat \alpha}\cdot \widehat\mu_1,~s_{\widehat \alpha}\cdot \widehat\mu_2$. Then $\Ext^1_{\mc O}(\wdL(\la), \nbob(\mu_1))\neq 0,~\Ext^1_{\mc O}(\wdL(\la), \nbob(\mu_2))\neq 0$ by \eqref{eq::nonKL2}, which implies that  $\ell(\mu_1)\equiv \ell(\mu_2)~(\text{mod}~2)$.  Hence \eqref{eq::eq424} follows.
  	
  	We now show that \eqref{eq::eq423} implies \eqref{eq::eq424}    in the case when $\widehat \la\in \{s_{\widehat \alpha}\cdot \widehat\mu_1,~ s_{\widehat \alpha}\cdot \widehat\mu_2\}$. To see this, we only need to consider the case that $\widehat \la = s_{\widehat \alpha}\cdot \widehat\mu_1$ and $\widehat \la \neq  s_{\widehat \alpha}\cdot \widehat\mu_2$. By \eqref{eq::nonKL2} again, we have $\Ext^1_{\mc O}(\wdL(\la), \nbob(\mu_2))\neq 0$, and so $\ell(\la)  \equiv \ell(\mu_2)+1 ~\text{(mod $2$)}$. Also, we note that   $\widehat \la =  s_{\widehat \alpha}\cdot \widehat\mu_1$ is equivalent to $\la =s_\alpha\cdot \mu_1$. It then follows from Lemma  \ref{lem::lem14} and \cite[Proposition 6.17]{CoM2} that $\Ext^1_{\mc O}(\wdM(\la), \wdL(\mu_1))> 0$, which implies that $\ell(\la)  \equiv \ell(\mu_1)+1 ~\text{(mod $2$)}$. Consequently, since  $\ell(\mu_1)\equiv \ell(\mu_2)~\text{(mod 2)}$ we have  $\Ext^1_{\mc O}(\wdM(\mu_1), \wdL(\mu_2))=0$, as desired.

  	Similarly, if  
  	\begin{align}
  	&\Hom_{\mc O}(\wdM(\mu_1), U_\alpha(\la))\neq 0,~ \Hom_{\mc O}(\wdM(\mu_2), U_\alpha(\la))\neq 0,
  	\end{align} then 
  	$\ell(\mu_1)\equiv \ell(\mu_2)~\text{(mod 2)}$ by \eqref{eq::nonKL3} for the same reason. We may conclude that $\Ext^1_{\mc O}(\widetilde L(\mu_1), \nbob(\mu_2))=0$. Consequently, the Jantzen middle $U_\alpha(\la)$ is always semisimple by \cite[Thoerem 4.1]{CPS93}, which   contradicts to our assumption. 
  	This completes the proof. 
  \end{proof}

  \section{The Jantzen middles for $\pn$}\label{sect::KLpn}
    
  In this section, we consider the periplectic Lie superalgebra $\g:=\pn$. Recall that $\omega_n:= \vare_1 +\vare_2+\cdots +\vare_n$. 
  
  \subsection{Jantzen middles in typical blocks} \label{Sect::421} 
  
  We  define the equivalence relation $\sim$ on $\mc P$  transitively generated by $\la\sim w\cdot \la$ and $\la\sim \la \pm2\vare_k$, for~$w\in W$ and $1\le k\le n$; see \cite[Section 5.2]{CC}.  
    In particular, we have the following decomposition of $\mc O^\Z$ into indecomposable blocks. 
  \begin{lem} \label{lem::pnblocks} \emph{(}\cite[Theorem 5.4]{CC}\emph{)} 
  	 For any $\la, \mu \in \mc P$, we have 
  	 \begin{align}
  	 &\widetilde{L}(\mu)\in \mc O_\la \Leftrightarrow \la \sim \mu.\label{eq::CC54}
  	 \end{align} In particular, we have 
  	$$\cO^{\mZ}\;=\;\bigoplus_{i=0,~k\in \C}^n \cO_{\partial^i+k\omega_n},$$
  	with~$\partial^{i}:=i\vare_1+(i-1)\vare_2 + \cdots + \vare_{i}$.
  \end{lem}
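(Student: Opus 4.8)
This is \cite[Theorem~5.4]{CC}; I indicate the strategy. The plan is first to reduce to a statement about linkage: by general highest weight category theory the indecomposable blocks of $\mc O^\Z$ are the equivalence classes of highest weights under the relation generated by ``$\wdL(\la)$ and $\wdL(\mu)$ occur as composition factors of a single standard module''. Since $\mc O$ also carries the highest weight structure associated with $\mf b^r$, related to the $\mf b$-one by the duality $D$ of \eqref{eq::dualVerma}, one is free to use the standard modules $\wdM(\nu)$ and $\wdM_{\mf b^r}(\nu)$ interchangeably in this reduction. It then remains to show that $\mu$ is linked to $\la$ if and only if $\la\sim\mu$.

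For $\la\sim\mu\Rightarrow$ linked I would treat the two kinds of generators of $\sim$ separately. The $W$-part is clean: let $\la^-$ be the antidominant weight of the dot-orbit $W\cdot\la$, so that $\la^-\le\mu$ for every $\mu\in W\cdot\la$; the classical BGG linkage principle gives $[M(\mu):L(\la^-)]\neq 0$ for all such $\mu$, and applying the exact Kac functor --- using $K(M(\mu))=\wdM(\mu)$ and that $\wdL(\la^-)$ is the top, hence a composition factor, of $K(L(\la^-))$ --- yields $[\wdM(\mu):\wdL(\la^-)]\neq 0$. As also $[\wdM(\mu):\wdL(\mu)]\neq 0$, all the $\wdL(\mu)$ with $\mu\in W\cdot\la$ lie in one block.

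The shifts $\la\sim\la\pm 2\vare_k$, and likewise the converse implication, are where the particular structure of $\pn$ enters, and I expect this to be the main obstacle. The relevant feature is the asymmetry of the type-I grading relative to $\g_\oa\cong\mathfrak{gl}(n)$: $\g_1$ consists of the symmetric matrices, so its weights are the $\vare_i+\vare_j$ with $i\le j$ --- in particular the $2\vare_k$ --- whereas $\g_{-1}$ consists of the skew-symmetric ones, whose weights are the $-(\vare_i+\vare_j)$ with $i<j$ only. Restricting to $\g_\oa$ one has $\wdM(\la)|_{\g_\oa}\cong\Lambda^\bullet(\g_{-1})\otimes M(\la)$ and $\wdM_{\mf b^r}(\la)|_{\g_\oa}\cong\Lambda^\bullet(\g_1)\otimes M(\la)$, both filtered by even Verma modules $M(\la+\chi)$ with $\chi$ a weight of $\Lambda^\bullet(\g_{-1})$, respectively of $\Lambda^\bullet(\g_1)$, so by the classical linkage principle the highest weight of every $\g$-composition factor lies in some $W\cdot(\la+\chi)$. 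This bound is by itself too coarse --- an individual summand $\la+\chi$ with $\chi=-(\vare_i+\vare_j)$ need not be $\sim$-equivalent to $\la$ --- so the core of the argument is to show that the genuine $\g$-composition factors are far more restricted: that only the $\chi$ with $\la+\chi\sim\la$ actually contribute, and that the shift $M(\la\pm 2\vare_k)$ occurring inside $\wdM_{\mf b^r}(\la)$ does produce a composition factor, which then links $\wdL(\la)$ and $\wdL(\la\pm 2\vare_k)$ exactly as in Method~1 of Example~\ref{ex::exm11}. I would establish this by transporting the problem through the chain of odd reflections and Borel inclusions \eqref{eq::oddrefeq0}--\eqref{eq::oddrefeq} joining $\mf b^r$ to $\mf b$: by Lemma~\ref{lem::oddref} the highest weight of a simple module changes along the chain only by sums of roots $\vare_p+\vare_q$, and only under the conditions $\la^\ell_p\neq\la^\ell_q$; using the chosen normalisation of $\rho$ and the Weyl vectors of the intermediate Borels, it is precisely this conditional behaviour that matches the generators of $\sim$ and keeps the $\sim$-class of the highest weight unchanged in passing from $\mf b^r$ to $\mf b$.

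Finally, the displayed block decomposition is an elementary count once the equivalence $\wdL(\mu)\in\mc O_\la\Leftrightarrow\la\sim\mu$ is known. Using the auto-equivalences $\C_{k\omega_n}\otimes-$ of $\mc O$, the classification of blocks reduces to the $\sim$-classes of weights $\la$ with $\la+\rho\in\bigoplus_{i=1}^n\Z\vare_i$; on these, $\sim$ acts on the multiset of coordinates of $\la+\rho$ by arbitrary permutations and by adding even integers to individual entries, so a complete invariant is the number of odd coordinates of $\la+\rho$, which ranges over the $n+1$ values $0,1,\dots,n$, and the weights $\partial^i$ for $0\le i\le n$ realise these bijectively. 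This gives $\cO^\Z=\bigoplus_{i=0}^{n}\bigoplus_{k\in\C}\cO_{\partial^i+k\omega_n}$.
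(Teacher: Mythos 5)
You should first note that the paper itself gives no proof of this lemma: it is imported verbatim from \cite[Theorem 5.4]{CC}, so there is no internal argument to compare against, and your sketch has to stand on its own. The parts that do stand are the reduction of the block decomposition to linkage through standard modules, the $W$-orbit generators via the BGG linkage principle and the exact Kac functor, and the final combinatorial paragraph deducing the displayed decomposition from the equivalence \eqref{eq::CC54} (the complete invariant being the number of odd coordinates of $\la+\rho$, up to the twist by $\C_{k\omega_n}$).

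The genuine gap is in the direction ``same block $\Rightarrow \la\sim\mu$'', i.e.\ in showing that every composition factor $\wdL(\nu)$ of $\wdM(\la)$ satisfies $\nu\sim\la$. You correctly identify that the bound $\nu\in W\cdot(\la+\chi)$ with $\chi$ a weight of $\Lambda^\bullet(\g_{-1})$ is too coarse, but the mechanism you propose in its place rests on a false claim. Observe that $\sim$ preserves the multiset of parities of the coordinates of $\la+\rho$ (the dot action permutes them; $\pm2\vare_k$ preserves each parity), so the number of odd coordinates of $\la+\rho$ is a $\sim$-invariant on $\bigoplus_i\Z\vare_i$. The relabelling $\mf b^r\leftrightarrow\mf b$ of a simple module via Lemma \ref{lem::oddref} does \emph{not} preserve this invariant: for $\mf{pe}(2)$ and the typical antidominant weight $\vare_2$ one has $\wdL(\vare_2)=K(L(\vare_2))=\wdL_{\mf b^r}(-\vare_1)$ (the odd reflection at $\vare_1+\vare_2$ is active because $-1\neq 0$), yet $\vare_2+\rho=(1,1)$ has two odd entries while $-\vare_1+\rho=(0,0)$ has none, so $\vare_2\not\sim-\vare_1$. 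The underlying mismatch is that the odd-reflection rule is governed by the equality condition $\la^\ell_p\neq\la^\ell_q$, whereas the $\sim$-invariant is a parity condition; and, more fundamentally, $\sim$ compares $\mf b$-highest weights of \emph{distinct} simples in a block, while the $\mf b^r$-highest weight is merely a different label for the \emph{same} simple, so ``preserving the $\sim$-class under the change of Borel'' is not the statement you need (and is false). Consequently the hard half of the lemma is not established. A smaller issue on the easy half: the generator $\la\sim\la\pm2\vare_k$ is realised in this paper's own later argument (Proposition \ref{Thm::nonssU}) by combining a Weyl reflection with the top-to-socle link inside a Kac module, $s_{\vare_1-\vare_2}\cdot\la-\vare_1-\vare_2=\la-2\vare_2$, rather than by the even subquotient $M(\la+2\vare_k)$ of $\Lambda^\bullet(\g_1)\otimes M(\la)$; your route would additionally require an argument that this $\g_\oa$-subquotient survives to a genuine $\g$-composition factor.
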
 For any $k\in\C$, we have an equivalence  
\begin{align}
&\C_{k\omega_n}\otimes -:\cO_{\partial^i}\cong \cO_{\partial^i+k\omega_n}, \label{eq::equiv}
\end{align}
 see \cite[Lemma 5.10]{CC}. Therefore, we shall focus on the blocks $\mc O_{\partial^0},\ldots ,\mc O_{\partial^n}.$  We recall  the notion of typicality of weights for~$\mf{pe}(n)$ from \cite[Section 5]{Se02} below. Let $T, T_+,T_-$ be the polynomials on $\h^\ast$ given by  
  \begin{align}
  &T_\pm(\la):= \prod_{\alpha \in \Phi^+_\oa}(\langle \la+\rho, \alpha\rangle \pm 1) =\prod_{i<j}(\la_i-\la_j+j-i\pm 1),~T(\la):=T_+(\la) T_-(\la).
  \end{align} Then $\la$ is called {\em typical} if $T(\la)\neq 0$, and {\em atypical} otherwise. A block $\mc O_\la$ is said to be {\em atypical} if  $\widetilde{L}(\mu)\in\mc O_\la$, for some atypical $\mu\in \h^\ast$, and $\mc O_\la$ is called {\em typical} otherwise. By Lemma \ref{lem::pnblocks},  these blocks $$\mc O_{\partial^i},~\text{for } i=0,\ldots n-2$$ are atypical.  Conversely,  if $\wdL(\la)$ lies in one of  $$\mc O_{\partial^{n-1}},~\mc O_{\partial^{n}},$$ then $\la+\rho =\sum_{i=1}^n(\la+\rho)_i\vare_i$ satisfying that $(\la+\rho)_1,(\la+\rho)_2,\ldots,(\la+\rho)_n$ are either all even or all odd. Therefore, $\la$ is typical, and so both $\mc O_{\partial^{n-1}},~\mc O_{\partial^{n}}$ are typical. 
  
  It is shown in \cite[Theorem C, Theorem 4.6]{CP} that the characters of tilting modules in $\mc O_{\partial^{n-1}}$ and  $\mc O_{\partial^{n}}$ are completely controlled by the Kazhdan-Lusztig polynomials of type A Lie algebras. Therefore it is natural to ask whether $\mc O_{\partial^{n-1}}$ and  $\mc O_{\partial^{n}}$ admit abstract Kazhdan-Lusztig theories. The following proposition provides another evidence by showing the semisimplicity of Jantzen middles of $\mc O_{\partial^{n-1}}$ and  $\mc O_{\partial^{n}}$.
  
  \begin{thm} \label{prop::9}  Suppose that $\la\in \h^\ast$ is typical. Then  $U_\alpha(\la)$ is either zero or semisimple, for any $\alpha \in \Pi_0$. In particular, all (non-zero) Jantzen middles in $\mc O_{\partial^{n-1}}$ and  $\mc O_{\partial^{n}}$ are semisimple. 
  \end{thm}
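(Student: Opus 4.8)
The plan is to reduce the statement, via the type-$A$ description of the typical blocks from \cite{CP}, to the classical semisimplicity of Jantzen middles for $\mf{gl}(n)$. This is the $\pn$-analogue of \cite[Theorem~6.10]{Co16}, except that there the input was the Brundan--Kazhdan--Lusztig theory of $\gl(m|n)$, whereas here only the tilting-character result of \cite{CP} is at hand, so the argument has to be carried out more explicitly.

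First I would dispose of the trivial case. If $\wdL(\la)$ is $\alpha$-finite then $T_{s_\alpha}\wdL(\la)=0$ by property~(5) of $T_{s_\alpha}$ recalled above, so $U_\alpha(\la)=0$; hence by Lemma~\ref{lem::3} I may assume $\langle\la+\rho,\alpha^\vee\rangle\in\Z_{\le 0}$, i.e.\ $\wdL(\la)$ is $\alpha$-free, and that $U_\alpha(\la)\neq 0$. By Proposition~\ref{prop::prop10}, $T_{s_\alpha}\wdL(\la)$ then has simple top $\wdL(\la)$ and $U_\alpha(\la)$ is the largest $\alpha$-finite quotient of $\rad\widetilde P(\la)$; in particular every composition factor of $U_\alpha(\la)$ is some $\wdL(\nu)$ with $\wdL(\nu)$ $\alpha$-finite. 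By Proposition~\ref{prop::100} both the socle and the top of $U_\alpha(\la)$ are semisimple, the multiplicity of $\wdL(\nu)$ being $\dim\Ext^1_{\mc O}(\wdL(\nu),\wdL(\la))$ in the socle and $\dim\Ext^1_{\mc O}(\wdL(\la),\wdL(\nu))$ in the top. Therefore $U_\alpha(\la)$ is semisimple exactly when
\[
\operatorname{length}\big(U_\alpha(\la)\big)\;=\;\sum_{\wdL(\nu)\ \alpha\text{-finite}}\dim\Ext^1_{\mc O}\big(\wdL(\nu),\wdL(\la)\big),
\]
equivalently $\ch U_\alpha(\la)=\ch\operatorname{soc} U_\alpha(\la)$, and it suffices to determine $\ch U_\alpha(\la)$ together with these $\Ext^1$-dimensions.

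To get $\ch U_\alpha(\la)$ I would pass to the Grothendieck group: since $\wdL(\la)$ is a simple $\alpha$-free module its maximal $\alpha$-finite submodule is zero, so $\mc L_1 T_{s_\alpha}\wdL(\la)=0$, $[T_{s_\alpha}\wdL(\la)]=[\mc L T_{s_\alpha}\wdL(\la)]$, and since $\mc L T_{s_\alpha}$ is an auto-equivalence of $\mc D^b(\mc O)$ sending $[\wdM(\mu)]$ to $[\wdM(s_\alpha\cdot\mu)]$ (using $T_{s_\alpha}\wdM(\mu)\cong K(T_{s_\alpha}^\oa M(\mu))$ as in Example~\ref{ex::exm11}), this computes $[U_\alpha(\la)]=[T_{s_\alpha}\wdL(\la)]-[\wdL(\la)]$ from any expression of $[\wdL(\la)]$ in the $[\wdM(\mu)]$. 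Using the equivalences $\C_{k\omega_n}\otimes-$ of \eqref{eq::equiv} and Lemma~\ref{lem::pnblocks} I reduce to $\la\in\mc O_{\partial^{n-1}}$ or $\la\in\mc O_{\partial^{n}}$. For these typical blocks \cite[Theorem~C, Theorem~4.6]{CP} computes the tilting characters in terms of Kazhdan--Lusztig polynomials of type $A$; through the highest-weight-category formalism of \cite{CCC} (together with Lemma~\ref{lem::oddref} to pass between $\mf b^r$- and $\mf b$-highest weights, cf.\ Example~\ref{ex::example3}) this should also pin down the Verma multiplicities $[\wdM(\mu):\wdL(\nu)]$, the inverse coefficients writing $[\wdL(\la)]$ in the $[\wdM(\mu)]$, and the numbers $\dim\Ext^1_{\mc O}(\wdL(\nu),\wdL(\la))$, all in terms of the same type-$A$ data. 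The required identity $\ch U_\alpha(\la)=\ch\operatorname{soc} U_\alpha(\la)$ then unwinds to a family of identities among type-$A$ Kazhdan--Lusztig polynomials, which hold precisely because all Jantzen middles in $\mc O$ for $\mf{gl}(n)$ are semisimple (\cite[Section~7]{AS}).

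The main obstacle is this reduction to type $A$, and it is exactly where typicality is essential. In general, as in Example~\ref{ex::exm11} (where $\g=\mf{pe}(2)$ and $U_\alpha(\la)$ is \emph{not} semisimple), $T_{s_\alpha}\wdL(\la)$ can pick up an extra ``odd'' composition factor coming from the $\g_{-1}$-part of the Verma flag --- the factor $\wdL(s_\alpha\cdot\la-\omega_2)$ there --- and this factor can be glued non-semisimply below $\operatorname{top}U_\alpha(\la)$. One must show this cannot happen when $\la$ is typical: since $T_\pm(\la)=\prod_{i<j}(\la_i-\la_j+j-i\pm1)\neq 0$, every weight $\la-(\vare_i+\vare_j)$ obtained by subtracting a root of $\g_1$ stays in general position relative to the walls of $\Phi_\oa^+$, so the odd part of $\rad\widetilde P(\la)$ contributes no new non-split extensions inside its $\alpha$-finite quotient; concretely this forces the composition multiplicities of the $\alpha$-finite $\wdL(\nu)$ in $U_\alpha(\la)$ to equal their multiplicities in $\operatorname{top}U_\alpha(\la)$, whence $U_\alpha(\la)=\operatorname{top}U_\alpha(\la)$ is semisimple. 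Finally, by Lemma~\ref{lem::pnblocks} and \eqref{eq::equiv} the blocks $\mc O_{\partial^{n-1}}$ and $\mc O_{\partial^{n}}$ are, up to the twists $\C_{k\omega_n}\otimes-$, precisely the typical blocks of $\mc O^\Z$, so every one of their Jantzen middles is of the form $U_\alpha(\la)$ with $\la$ typical and hence is zero or semisimple.
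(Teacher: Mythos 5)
Your high-level target is right --- reduce to $\gl(n)$ and invoke the classical semisimplicity of Jantzen middles --- but the route you propose leaves the decisive step unproved, and it is exactly the step where typicality enters. The paper's proof is much more direct: for typical $\la$ one has $\wdL(\la)=K(L(\la))$ (\cite[Lemma 3.2]{Se02}, \cite[Lemma 5.11]{CC}), and since $T_{s_\alpha}^\oa L(\la)$ is indecomposable (or zero) the twisting functor commutes with Kac induction, $T_{s_\alpha}K(L(\la))\cong K(T_{s_\alpha}^\oa L(\la))$; as $K(-)$ preserves the top (\cite[Theorem 51]{CCM}), this gives $U_\alpha(\la)\cong K\bigl(\rad T_{s_\alpha}^\oa L(\la)\bigr)$. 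All composition factors of $\rad T_{s_\alpha}^\oa L(\la)$ lie in $W\cdot\la$, hence are typical, so applying $K$ to the (classically semisimple, by Andersen--Stroppel) even Jantzen middle yields a direct sum of simple typical Kac modules. No tilting characters, no block classification, and the argument covers arbitrary typical $\la\in\h^\ast$, not only the integral blocks where \cite{CP} applies.

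The gap in your version is the paragraph headed ``the main obstacle.'' You correctly identify that the danger is the odd contribution to $T_{s_\alpha}\wdL(\la)$ (the extra factor $\wdL(s_\alpha\cdot\la-\omega_2)$ in Example \ref{ex::exm11}), but your proposed resolution does not work as stated: typicality is the condition $\langle\la+\rho,\alpha\rangle\neq\pm1$ for $\alpha\in\Phi_\oa^+$, which is not ``general position relative to the walls of $\Phi_\oa^+$'' (that would be $\langle\la+\rho,\alpha\rangle\neq 0$), and the assertion that the odd part of $\rad\widetilde P(\la)$ therefore ``contributes no new non-split extensions'' is exactly the statement to be proved, not a consequence of anything you have established. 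What typicality actually buys is the simplicity of the Kac module, which you never invoke. Separately, your claim that the tilting characters of \cite[Theorem C, Theorem 4.6]{CP} ``pin down'' the numbers $\dim\Ext^1_{\mc O}(\wdL(\nu),\wdL(\la))$ is unsubstantiated: decomposition numbers in a highest weight category do not in general determine first extension groups between simples --- that implication is itself a Kazhdan--Lusztig-theory-type statement, i.e.\ the very kind of assertion this paper is investigating. So as written the proposal is a plausible plan with its central verification missing.
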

  \begin{proof}
  	By  \cite[Lemma 3.2]{Se02} (see, also \cite[Lemma 5.11]{CC}), we have $\wdL(\la) = K(L(\la)).$ Set $s:=s_\alpha$. We first claim that $T_sK(L(\la))\cong K(T^\oa_sL(\la)).$ To see this, consider the  $\g_{\geq 0}$-module $L_\la$, which is the $\g_\oa$-module $L(\la)$ with trivial $\g_1$-action.  With slightly abusing notations, we denote by $T_s$ the twisting functor for $\g_{\geq 0}$-modules again. Note that $\Res_{\g_0}^{\g_{\geq 0}}T_sL_\la\cong T_s^\oa L(\la)$ is either zero or an indecomposable $\g_\oa$-module by Proposition \ref{prop::prop10}.
  	Since we have either $(T_sL_\la)_\oa =0$ or $(T_sL_\la)_\ob =0$, it follows  that $\g_1$ acts on the $\g_{\geq 0}$-module $T_sL_\la$ trivially. We compute 
  		\begin{align*}
  		&T_sK(L(\la))\cong \Ind_{\g_{\geq 0}}^{\g} T_s L_\la\cong K(T_s^\oa L(\la)).
  		\end{align*} 
  	
  	By  \cite[Theorem 51]{CCM}  the Kac functor $K(-)$ preserves the length of top. Consequently, we have
  	\[U_\alpha(\la) = \rad T_s\wdL(\la)\cong \rad K(T_s^\oa L(\la))\cong K(\rad T_s^\oa L(\la)).\]
  	Since $T_s^\oa(-)$ is right exact and $\ch T_s^\oa M(\la)=\ch M(s\cdot \la)$, we may conclude that every composition factor of $\rad T_s^\oa L(\la)$ is of the form $L(\mu)$ with $\mu\in W\cdot \la$. Therefore, if  $m_{\la,\mu}:=[\rad T_s^\oa L(\la):L(\mu)]>0$, then $\mu$ is typical. This implies that $U_\alpha(\la)\cong  \bigoplus_{\mu\in \h^\ast} \wdL(\mu)^{\oplus m_{\la,\mu}}$ is either zero or  semisimple. The conclusion follows. 
  \end{proof} 

\begin{ex}
	Consider $\g =\mf{pe}(2)$ with the unique positive even root $\alpha:=\vare_1-\vare_2$. We have computed Jantzen middles $U_\alpha(\la)$ for atypical weights  $\la$ in Example \ref{ex::exm11}. Now, we consider Jantzen middles $U_\alpha(\la)$ in the case when $\la$ are typical. Set $T:=T_{s_\alpha}$. 
	
	Suppose that  $\la=a\vare_1+b\vare_2\in \Z\vare_1\bigoplus \Z\vare_2$ is typical, namely, $b\neq a, a+2$. If $b>a+2$ then $T_s^\oa M(\la)$ is the dual Verma module over $\g_\oa$ with top and radical isomorphic to $L(\la)$ and $L(s\cdot \la)$, respectively. It then  follows that \[U_\alpha(\la) = \rad T_sL(\la)\cong  K(\rad T_s^\oa M(\la))\cong \wdL(s\cdot \la).\]	
   
   If $b=a+1$	then  $U_\alpha(\la)=0$  by the character formulas of twisted simple modules computed in \cite[Lemma 6.7]{CC}. We now collect our results in the following complete classification of Jantzen middles of $\mf{pe}(2)$ (see also \cite[Lemma 6.7]{CC}):

   	\begin{align}
   &U_\alpha(\la)=\begin{cases}  0  &\mbox{ if $b\leq a+1$,}\\
   K(L(\la+\vare_1-\vare_2))&\mbox{ if $b=a+2$.} \\
  \wdL(s\cdot \la)&\mbox{ if $b>a+2$.} 
   \end{cases} 
   \end{align}
\end{ex}

  
  \subsection{The absence of   Kazhdan-Lusztig theory for $\pn$} In this section, we turn to the semisimplicity   of Jantzen middles for atypical blocks of  $\pn$. 

 Suppose that $\mc O_\eta$ is an atypical block of $\mc O^\Z$. Applying the equivalence \eqref{eq::equiv}  if necessary,  we may assume that $\eta \in \bigoplus_{i=1}^n\Z\vare_i$. 	For any weight $\gamma\in \mc P$, we let $\gamma_i$ be determined by $\gamma= \sum_{i=1}^n \gamma_i\vare_i$.  
 	Since  $\mc O_\eta$ is atypical, there are $1\leq i,j\leq n$ such that $(\eta+\rho)_i  \not\equiv (\eta+\rho)_j~ \text{(mod~2)}$ by \eqref{eq::CC54}. In addition,   there is an anti-dominant weight $\la\in \mc P$ such that  $\wdL(\la)\in \mc O_\eta$ satisfying $$(\la+\rho)_1 =0,~  (\la+\rho)_2 =1,\text{ and } \la_{k+1} -\la_{k}>1,$$ for $k=2,3,\ldots,n-1.$ The following proposition shows that the corresponding Jantzen middle $U_\alpha(\la)$ in $\mc O_\eta$ is non-semisimple.

  \begin{prop} \label{Thm::nonssU}
  	 Retain the notations above. Then $U_\alpha(\la)$ is not semisimple.

  \end{prop}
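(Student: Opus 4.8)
The plan is to mimic \emph{Method 1} of Example \ref{ex::exm11}, producing a submodule and a quotient of $U_\alpha(\la)$ that are non-isomorphic simple modules. Concretely, I would first pin down $\alpha\in\Pi_0$: since $(\la+\rho)_1=0$ and $(\la+\rho)_2=1$, the natural choice is $\alpha:=\vare_1-\vare_2$, and one checks $\langle\la+\rho,\alpha^\vee\rangle=(\la+\rho)_1-(\la+\rho)_2=-1\in\Z_{<0}$, so by Lemma \ref{lem::3} the module $\wdL(\la)$ is $\alpha$-free and Propositions \ref{prop::prop10} and \ref{prop::100} apply. The strategy is then: (i) compute $\ch U_\alpha(\la)$, identifying at least two distinct simple constituents $\wdL(\nu_1)$, $\wdL(\nu_2)$ with $\nu_1$ atypical and $\nu_2$ possibly typical or atypical but with $\nu_1\neq\nu_2$; (ii) show $\Ext^1_{\mc O}(\wdL(\nu_1),\wdL(\la))\neq 0$ so that by Proposition \ref{prop::100} the simple $\wdL(\nu_1)$ sits in $\mathrm{soc}\,U_\alpha(\la)$; (iii) show $\Ext^1_{\mc O}(\wdL(\la),\wdL(\nu_1))=0$, or more precisely that the top of $U_\alpha(\la)$ does \emph{not} contain $\wdL(\nu_1)$, so that $U_\alpha(\la)$ has a constituent in its socle not appearing in its top, hence is non-semisimple.

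For step (i), I would use the character formula $\ch T_{s_\alpha}\wdL(\la)=\ch T_{s_\alpha}K(L(\la))$. Since $\la$ is anti-dominant with $\langle\la+\rho,\alpha^\vee\rangle=-1$, the $\g_\oa$-picture is controlled: $T_{s_\alpha}^\oa L(\la)$ is built from $M(s_\alpha\cdot\la)$, and one has $s_\alpha\cdot\la=\la+\alpha$ because $\langle\la+\rho,\alpha^\vee\rangle=-1$. Applying the Kac functor and using $K$-exactness plus the fact (as in \eqref{eq::37}) that $T_{s_\alpha}$ commutes with $K$ on these modules, I get $\ch U_\alpha(\la)$ as a sum of $\ch\wdL(\mu)$ over certain $\mu$ above $\la+\alpha$ in the partial order. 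The choice of $\la$ — namely $(\la+\rho)_1=0$, $(\la+\rho)_2=1$, and large gaps $\la_{k+1}-\la_k>1$ for $k\ge 2$ — is engineered precisely so that the weights $\la+\alpha$ and $s_\alpha\cdot\la$ (plus the shift by $-\omega_n$ terms coming from $\g_{-1}$) yield a small, explicitly identifiable set of highest weights, at least one of which is atypical (since $(\la+\rho)_1\not\equiv(\la+\rho)_2\pmod 2$ is preserved).

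For step (ii), the key tool is Lemma \ref{lem::lem14}: it suffices to exhibit $\Ext^1_{\mc O_\oa}(M(\la),L(\mu))\neq 0$ for an appropriate $\mu$, i.e. a non-split extension of a Verma module by a simple in the classical category $\mc O_\oa$ of $\gl(n)$, which is standard BGG combinatorics (one takes $\mu$ covering $\la$ in the Bruhat-type order on the orbit, using that $\la$ is anti-dominant). Alternatively, as in Method 1 of Example \ref{ex::exm11}, I would build the extension directly: write down a short exact sequence $0\to\wdL(\la)\to\wdM(\nu_1)\to Q\to 0$ (or a radical-layer sequence for a Verma above $\la$) obtained by applying $K$ to the analogous $\g_\oa$-sequence, then use that $\wdM(\nu_1)$ has simple socle $\wdL(\la)$ (by \cite[Theorem 51]{CCM} and the structure of $\rad\wdM$) to get the non-vanishing $\Ext^1$. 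For step (iii), the vanishing of the ``return'' extension $\Ext^1_{\mc O}(\wdL(\la),\wdL(\nu_1))$ should follow from a degree/order argument: $\la$ is anti-dominant, so it cannot appear as a subquotient ``below'' $\nu_1$ in the relevant Verma, exactly as the observation in Example \ref{ex::exm11} that ``$\la$ cannot be written as a sum of $\omega_2$ and positive roots'' forces a sequence to split.

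The main obstacle I anticipate is step (i) combined with the bookkeeping in step (iii): one must be sure that the particular simple module landing in the socle of $U_\alpha(\la)$ genuinely fails to appear in its top, which requires controlling \emph{all} the composition factors and the submodule structure of $T_{s_\alpha}\wdL(\la)$, not just two of them — and for general $n$ the character of $\wdL(\la)$ itself is only partially known. The anti-dominance and the large gaps are the saving grace: they should reduce $\wdL(\la)$, $\wdM(\nu_1)$ and $T_{s_\alpha}\wdL(\la)$ to a $\mf{pe}(2)$-like local situation (essentially the $\{\vare_1,\vare_2\}$-block), letting me import the $\mf{pe}(2)$ computation of Example \ref{ex::exm11} more or less verbatim after an appropriate parabolic induction / translation argument. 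I would therefore structure the proof so that the hard analytic content is isolated in reducing to this rank-two picture, after which the non-semisimplicity is the already-established phenomenon.
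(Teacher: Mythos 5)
Your overall strategy is the paper's: choose $\alpha=\vare_1-\vare_2$, and exhibit a single $\alpha$-finite simple $\wdL(\nu_1)$ with $\Ext^1_{\mc O}(\wdL(\nu_1),\wdL(\la))\neq 0$ but $\Ext^1_{\mc O}(\wdL(\la),\wdL(\nu_1))=0$, so that by Proposition \ref{prop::100} it occurs in $\mathrm{soc}\,U_\alpha(\la)$ but not in $\mathrm{top}\,U_\alpha(\la)$. Your worry that this "requires controlling all the composition factors and the submodule structure of $T_{s_\alpha}\wdL(\la)$" is unfounded: Proposition \ref{prop::100} describes the socle and top purely in terms of $\Ext^1$-groups, so step (i) (the full character of $U_\alpha(\la)$) and the proposed reduction to a $\mf{pe}(2)$-local picture are unnecessary. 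Likewise, the vanishing $\Ext^1_{\mc O}(\wdL(\la),\wdL(\nu_1))=0$ is exactly as you guess: any such extension is a quotient of $\wdM(\la)$ by weight considerations, and $\wdM(\la)=\wdL(\la)$ since $\la$ is anti-dominant.

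The genuine gap is that you never pin down $\nu_1$, and this is where the hypotheses on $\la$ are actually consumed. The paper takes $\mu:=s_\alpha\cdot\la-\vare_1-\vare_2=\la-2\vare_2$ and identifies $\wdL(\mu)$ as the socle of the Kac module $K(L(s_\alpha\cdot\la))$, i.e. as $\wdL_{\mf b^r}(s_\alpha\cdot\la-(n-1)\omega_n)$ rewritten in terms of $\mf b$-highest weights via the odd-reflection algorithm of Lemma \ref{lem::oddref} (cf. Example \ref{ex::example3}); the condition $\la_{k+1}-\la_k>1$ for $k\geq 2$ together with $(\la+\rho)_1=0$, $(\la+\rho)_2=1$ is precisely what makes that computation come out to $\la-2\vare_2$. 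Note that $\mu\notin W\cdot\la$ (it differs from $\la$ by $-2\vare_2$), so your first route for step (ii) — Lemma \ref{lem::lem14} plus "standard BGG combinatorics on the orbit" — cannot produce the required non-split extension; the extension $\Ext^1_{\mc O}(\wdL(\mu),\wdL(\la))\neq 0$ is a genuinely super phenomenon. Your alternative route is the correct one and is what the paper does: apply $K(-)$ to $0\to L(\la)\to M(s_\alpha\cdot\la)\to L(s_\alpha\cdot\la)\to 0$, use that $\wdM(s_\alpha\cdot\la)$ has simple socle $\wdL(\la)$ (by \cite[Theorem 51]{CCM}), and extract the non-split extension $0\to\wdL(\la)\to E\to\wdL(\mu)\to 0$ from the submodule of $\wdM(s_\alpha\cdot\la)$ lying over $\mathrm{soc}\,K(L(s_\alpha\cdot\la))$. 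With the odd-reflection identification supplied and the orbit-combinatorics route discarded, your plan closes up into the paper's proof.
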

  \begin{proof} 
  	
  	Let $\alpha := \vare_1 -\vare_2$ and $s:=s_{\alpha}$.
  	First, we note that the socle  
  	of $K(L(s\cdot \la))$ is $\wdL_{\mf b^r}(s\cdot \la- (n-1)\omega_n)$.   Put $\mu:= s \cdot \la -\vare_1-\vare_2=\la -2\vare_2$. It follows from Lemma \ref{lem::oddref} (see also Example \ref{ex::example3}) that  
  	$${\rm soc} K(L(s\cdot \la)) =\wdL_{\mf b^r}(s\cdot \la- (n-1)\omega_n) =   \wdL(\mu).$$
  	
   We firstly claim that $\Ext^1_{\mc O}(\wdL(\la), \wdL(\mu)) =0$, which will imply that  $[\text{top} (U_\alpha(\la)): \wdL(\mu) ] =0$ by Proposition \ref{prop::100}. To see this, we let 
  	\begin{align}
  	& 0\rightarrow \wdL(\mu)\rightarrow E \xrightarrow{\pi} \wdL(\la) \rightarrow 0, \label{eq::nonssUeq1}
  	\end{align}
  	be a short exact sequence. Let $v_\la \in E$ be the preimage of the highest weight vector of $\wdL(\la)$ under $\pi$. By weight consideration we have $\mf n^+ v_\la=0$. This means that $E$ is an image of the Verma module $\widetilde M(\la)$. Since $\la$ is anti-dominant, it follows that  $\widetilde M(\la) = \wdL(\la)$ by \cite[Lemma 5.11]{CC}; see also \cite[Lemma 3.2]{Se02}. Consequently, the short exact sequence \eqref{eq::nonssUeq1} is split.  
  	
  	Next, we show that   $\Ext^1_{\mc O}(\wdL(\mu), \wdL(\la)) \neq 0$, which will imply that  $[\text{soc}U_\alpha(\la): \wdL(\mu) ]\neq 0$ by Proposition \ref{prop::100}.  We recall that every Verma module has a simple socle by \cite[Theorem 51]{CCM}. Applying the Kac functor $K(-)$ to the short exact sequence $0\rightarrow L(\la)\rightarrow M(s\cdot \la)\rightarrow L(s\cdot \la) \rightarrow 0$, we then obtain a non-split short exact sequence
  	\begin{align*}
  	&0\rightarrow \wdL(\la) \rightarrow \wdM(s\cdot \la) \rightarrow K(L(s\cdot \la)) \rightarrow 0,  \end{align*} 
  	and so $ \wdL(\la)=\text{soc}\wdM(s\cdot \la)$.

  	Since $\ch T_{s} \wdL(\la) = \ch \wdM(s\cdot \la)$,  it follows that $\text{top} (U_\alpha(\la))\neq U_\alpha(\la)$. Hence, we have already proved that $U_\alpha(\la)$ is not semisimple. 
  	
  	We continue to show the stronger claim that $\Ext^1_{\mc O}(\wdL(\mu), \wdL(\la)) \neq 0$. Since $\wdM(s\cdot \la)$ has simple socle, there is a  submodule $E \subseteq \wdM(s\cdot \la)$ and a non-split short exact sequence
  	\begin{align}
  	&0\rightarrow \wdL(\la) \rightarrow E \rightarrow \wdL(\mu) \rightarrow 0. \label{Eq::ExtExt1}\end{align} This completes the proof. 
  \end{proof}
  

  The conclusion of Theorem B is a direct consequence of  Theorem \ref{thm::nonKL}, Theorem \ref{prop::9} and Proposition \ref{Thm::nonssU}.
  

\vspace{2mm}

\noindent 
Chih-Whi Chen:~Department of Mathematics, National Central University, Zhongli District, Taoyuan City, Taiwan;
E-mail: {\tt cwchen@math.ncu.edu.tw}
\hspace{2cm}




\begin{thebibliography}{999999}
	
	\bibitem[Ar]{Ark1}
	S. Arkhipov,
	Algebraic construction of contragradient quasi-Verma modules in positive characteristic. Representation theory of algebraic groups and quantum groups, 278, 
	Adv. Stud. Pure Math., 40, Math. Soc. Japan, Tokyo, 2004.
	
	\bibitem[AL]{AL}
	H.H. Andersen, N. Lauritzen, 
	Twisted Verma modules. Studies in memory of Issai Schur,
	Progr. Math., 210, Birkh\"auser Boston, Boston, MA, 2003, 1--26
		
		
		
		
		
		\bibitem[AS]{AS} H. H.~Andersen and C.~Stroppel,
		{\em Twisting functors on~$\cO$},
		Represent. Theory {\bf7} (2003), 681--699.
		
		
		
		
		
		\bibitem[Br]{Br1} J.~Brundan, {\em Kazhdan-Lusztig polynomials and character formulae for
			the Lie superalgebra $\mf{gl}(m|n)$}, J.~Amer.~Math.~Soc.~{\bf 16} (2003), 185--231.
		
		
		
		
		
		\bibitem[B+9]{B+9}
		M. Balagovic, Z. Daugherty, I. Entova-Aizenbud, I. Halacheva, J. Hennig, M.-S. Im, G. Letzter, E.
		Norton, V. Serganova, C. Stroppel, {\it Translation functors and decomposition numbers for the periplectic Lie superalgebra p(n)}, Mathematical Research Letters.
		
	
	
		
		\bibitem[BF]{BF}
		A.~ Bell and R.~ Farnsteiner. 
		{\em On the theory of Frobenius extensions and its application to Lie superalgebras}.
		Trans. Amer. Math. Soc. {\bf 335} (1993) 407--424.
		
			


\bibitem[BK]{BK} 
J.~Brylinski; M.~Kashiwara, 
{\em Kazhdan-Lusztig conjecture and holonomic systems}, 
Invent. Math., 
Springer-Verlag, {\bf 64} (3): 387--410.
			
			
			
 
			
				
			\bibitem[BS]{BS}
			J.~Brundan, C.~Stroppel. 
			{\em Semi-infinite highest weight categories}. 
			Preprint. 		\href{https://arxiv.org/abs/1808.08022}{https://arxiv.org/abs/1808.08022}. 
			
			
			
			
			
			
			
			
			
		\bibitem[BGG]{BGG}
		I.~Bern\v{s}tein, I.~Gel'fand and S.~Gel'fand.
		{\em A certain category of~$\mathfrak{g}$-modules}.
		Funkcional. Anal. i Prilo\v{z}en. {\bf 10} (1976), no. 2, 1--8.
		 
		
		
		\bibitem[BLW]{BLW}
		J. Brundan, I. Losev, B. Webster. 
		{\em Tensor Product Categorifications and the Super Kazhdan--Lusztig Conjecture.} 	Internat. Math. Res. Notices. {\bf 20} (2017)  6329--6410. 
	
		 
		
		\bibitem[Ch]{Ch15}	C.-W. ~Chen.	{\em Finite-dimensional representations of periplectic Lie superalgebras}. 		J. Algebra {\bf 443} (2015), 99--125.
		
			
			
			
			
			
		
				 
					
			
				\bibitem[Co1]{Co16}
		K.~Coulembier.
		{\em  The primitive spectrum of a basic classical Lie superalgebra.}
			Comm. Math. Phys. 348 (2016), no. {\bf 2}, 579--602.
			
			
			
			\bibitem[Co2]{Co}
			K.~Coulembier.
			{\em Gorenstein homological algebra for rngs and Lie superalgebras}. Preprint \href{https://arxiv.org/pdf/1707.05040.pdf}{https://arxiv.org/pdf/1707.05040.pdf}.
			
			
			\bibitem[Co3]{Copn}
			K. Coulembier,
			{\em The periplectic Brauer algebra}, Proc. Lond. Math. Soc. (3) {\bf 117} (2018), no. 3, 441--482.
	
		
			\bibitem[CC]{CC}
		C.-W. ~Chen and K.~Coulembier.
		{\em The primitive spectrum and category $\mc O$ for the periplectic Lie superalgebra}. Canad. J. Math. {\bf 72} 
		(2020), no. 3, 625--655.
		
	 
		
				\bibitem[CCo]{CCo}
		S.-J. ~Cheng and K.~Coulembier.	
		{\em Representation Theory of a Semisimple Extension of the Takiff Superalgebra}. 
		Int. Math. Res. Not. (2021), to appear.  \href{https://doi.org/10.1093/imrn/rnab149}{https://doi.org/10.1093/imrn/rnab149} 
		
		
		
		
		\bibitem[CP]{CP} C.-W.~Chen and Y.-N.~Peng. {\em Parabolic category $\mc O^\fp$ for periplectic Lie superalgebras $\pn$}. 	 Preprint \href{https://arxiv.org/pdf/2002.10311.pdf}{https://arxiv.org/pdf/2002.10311.pdf}.
		
			\bibitem[CM]{CM}
		C.-W. Chen and V.~Mazorchuk.
		{\em Simple supermodules over Lie superalgebras}.
		Trans. Amer. Math. Soc. {\bf 374} (2021), 899-921
	 
	 

	 
	 
	 
	 
	 
	 
	 
	 \bibitem[CoM1]{CoM1} K.~Coulembier and V.~Mazorchuk. {\em Primitive ideals, twisting functors and star actions for classical Lie superalgebras}. J. Reine Ang. Math., {\bf 718} (2016), 207--253.
	 
	 
	 \bibitem[CoM2]{CoM2} K.~Coulembier and V.~Mazorchuk. {\em Some homological properties of category $\mc O$. IV}. 	 Forum Math. {\bf 29} (2017), no. 5, 1083--1124.
	 
	 
	 
	 \bibitem[CS]{CSTAMS} K.~Coulembier and V.~Serganova. {\em Homological invariants in category $\mc O$ for the general linear superalgebra}, 	 Trans. Amer. Math. Soc. {\bf 369} (2017), 7961--7997.
	 
	 
	 \bibitem[CW]{ChWa12} 
	 S.-J.~Cheng and W.~Wang.
	 {\em Dualities and representations of Lie superalgebras}. Graduate
	 Studies in Mathematics {\bf 144}. American Mathematical Society,
	 Providence, RI, 2012.
	 
	 
	 
		
		\bibitem[CCC]{CCC}
		C.-W. ~Chen, S.-J.~ Cheng and K.~ Coulembier.
		{\em  Tilting modules for classical Lie superalgebras}.  J. Lond. Math. Soc. (2) {\bf 103} (2021), 870--900
		 
	
	
		\bibitem[CCM]{CCM}
		C.-W. Chen, K.~Coulembier and V.~Mazorchuk.
		{\em Translated simple modules for Lie algebras and simple supermodules for Lie superalgebras}. Math. Z. {\bf 297}, 255--281 (2021). 
		
			


\bibitem[CLW]{CLW2} S.-J.~Cheng, N.~Lam and W.~Wang, {\em Brundan-Kazhdan-Lusztig conjecture for general linear Lie superalgebras}, Duke~Math.~J.~{\bf 110} (2015), 617--695.
	
	 \bibitem[CMW]{CMW}
	S.-J. Cheng, V. Mazorchuk and W. Wang.
	{\em Equivalence of blocks for the general linear Lie superalgebra}.
	Lett. Math. Phys. 
	{\bf 103} (2013), 1313--1327.
	
	\bibitem[CMS]{CMS}
	C.~Hoyt, M.-S.~Im, S.~Reif
	{\em Denominator identities for the periplectic Lie superalgebra}, 	J. Algebra {\bf 567} (2021),  459--474. 
	
	 
	
		\bibitem[CPS1]{CPS1}
	E. Cline, B. Parshall and L. Scott, 
	{\em Finite dimensional algebras and highest weight categories,}
	J. Reine Angew. Math. {\bf 391} (1988), 85--99.
	
	\bibitem[CPS2]{CPS93}
	E. Cline, B. Parshall and L. Scott, 
	{\em  Abstract Kazhdan--Lusztig theories,}
	 Tohoku Math. J. (2) {\bf 45}(4) (1993),  511--534.
	
	\bibitem[CPS3]{CPS932}
	E. Cline, B. Parshall and L. Scott, 
	 {\em Infinitesimal Kazhdan-Lusztig theories.} Contemporary Mathematics {\bf 139} (1993): 43--43.
	
	\bibitem[Di]{Di} J.~Dixmier. {\em Enveloping algebras}. 
	Graduate Studies in Mathematics, {\bf 11}. 
	American Mathematical Society, Providence, RI, 1996. xx+379 pp.
	
	
	\bibitem[Du]{Duflo} 
	M. Duflo,
	Sur la classification des id\'eaux primitifs dans l'alg\`ebre enveloppante d'une alg\`ebre de Lie semi-simple.
	Ann. of Math. (2) {\bf105} (1977), no. 1, 107--120. 
	
	

	
	
	\bibitem[EAS]{EAS1}
	I.~Entova-Aizenbud, V.~Serganova, 
	{\em Deligne categories and the periplectic Lie superalgebra}. 
	 Preprint.
	 \href{https://arxiv.org/abs/1807.09478}{https://arxiv.org/abs/1807.09478} 
 
	
	\bibitem[Go]{Go} 
	 M. Gorelik. 
	{\em On the ghost centre of Lie superalgebra}. 
	Ann. Inst. Fourier (Grenoble) 50 (2000), no.
	{\bf 6}, 1745--1764.
	
	
	 
	
	
	 

	 
	 
	
	\bibitem[Hu]{Hu08} 
	J.~Humphreys. 
	{\em Representations of semisimple Lie algebras in the BGG category $\mathcal{O}$}. 
	Graduate Studies in Mathematics, vol. {\bf 94}, American Mathematical Society, Providence, RI, 2008.
	
	 \bibitem[IS]{IS}
	 I.~Entova-Aizenbud, V.~Serganova
	{\em  Duflo-Serganova functor and superdimension formula for the periplectic Lie superalgebra},
	arXiv preprint arXiv:1910.02294 (2019).
	
	\bibitem[IRS]{IRS}
	M.-S.~Im, S.~Reif, V.~Serganova, 
	{\em Grothendieck rings of periplectic Lie superalgebras}, arXiv preprint arXiv:1906.01948 (2019).
	
	
	
	
	
		\bibitem[Jo2]{Jo82}
	A.~Joseph.
	 {\em The Enright functor on the Bernstein-Gelfand-Gelfand category $\mc O$}.
	Invent. Math. {\bf 67} (1982), no. 3, 423--445.
	
	\bibitem[Ka1]{Ka1}
	V.~Kac.
	{\em Lie superalgebras}.
	Adv. Math. {\bf 16} (1977), 8--96.
	
   \bibitem[Ka2]{Ka2}
	V.~Kac.
	{\em Representations of classical Lie superalgebras, Differential geometrical methods in
	mathematical physics}, II (Proc. Conf., Univ. Bonn, Bonn, 1977), Lecture Notes in Math.,
	vol. {\bf 676}, Springer, 1978, pp. 597--626.
	
	
	
	
	
	
	
		\bibitem[KB]{KB} 
	B.~D.~Boe, J.~R. Kujawa
	{\em Complexity and Support Varieties for Type P Lie Superalgebras},
		Preprint.
	\href{https://arxiv.org/abs/2001.11310}{	https://arxiv.org/abs/2001.11310} 
	

	
		\bibitem[KL]{KL1} 
	D.~Kazhdan, G.~Lusztig, 
	{\em Representations of Coxeter groups and Hecke algebras}, 
	Invent. Math.
	{\bf 53} (1979), 165--184.
	
	
	
		\bibitem[KM]{KM2} 
	O.~Khomenko; V.~Mazorchuk,
	{\em  On Arkhipov's and Enright's functors}. 
	Math. Z. {\bf 249} 	(2005), no. 2, 357--386.
	
	
	
  
	
	
	
	



	
	
	\bibitem[Ma]{Ma}
	V.~Mazorchuk.
	{\em Parabolic category $\mc O$ for classical Lie superalgebras}.
	Advances in Lie Superalgebras. 
	Springer International Publishing, 2014, 149--166.
	
	 
	 
	 
	 	\bibitem[Mu1]{Mu92}
	 I. Musson,
	 {\em A classification of primitive ideals in the enveloping algebra of a classical simple Lie superalgebra},
	 Adv. Math. {\bf 91} (1992): 252--268.
	 
	 \bibitem[Mu2]{Mu12}
	 I.~Musson.
	 {\em Lie superalgebras and enveloping algebras}.
	 Graduate Studies in Mathematics, 131. 
	 American Mathematical Society, Providence, RI, 2012.
	
	
	
	
	
		
	
	
	
	
	
	
	\bibitem[MS]{MaSt}
	V. Mazorchuk, C. Stroppel, 
	On functors associated to a simple root.
	J. Algebra 314 (2007), no. 1, 97--128. 
	
	
	
	
		\bibitem[Pa]{Par} 
	B.~Parshall. 
	{\em The Ext Algebra of a Highest Weight Category.} Finite Dimensional Algebras and Related Topics. Springer, Dordrecht (1994). 213--222.
	
	\bibitem[PS1]{PS} I.~Penkov and V.~Serganova, {\em Generic irreducible representations of finite-dimensional Lie superalgebras}, 	Internat.~J.~Math.~{\bf 5} (1994) 389--419.
	
	\bibitem[PS2]{PS89} 	I.~Penkov and V.~Serganova.	{\em Cohomology of $G/P$ for classical complex Lie supergroups $G$ and characters of some atypical $G$-modules}. 	Ann. Inst. Fourier {\bf 39} (1989), 845--873.
	
	

	
	
		\bibitem[Sc]{Scott1} 
	L. L. Scott 
	{\em Quasihereditary algebras and Kazhdan-Lusztig theory.} Finite Dimensional Algebras and Related Topics. Springer, Dordrecht (1994). 293--308.
	
	
	
	\bibitem[Se1]{Se02} 	V.~Serganova.	{\em On representations of the Lie superalgebra $\mf p(n)$}.	J. Algebra 	{\bf 258} (2) (2002), 615--630.
	
	\bibitem[Se2]{Se11} 	V.~Serganova.
	{\em Quasireductive supergroups}, 
	New developments in Lie theory and its applications,
	Contemporary Mathematics 544 (eds C. Boyallian, E. Galina and L. Saal; American Mathematical Society,
	Providence, RI, 2011) 141--159.
	
	
	
	 
	
	
	
	
	


\end{thebibliography}
\end{document}